\theoremstyle{plain}
\newtheorem{theorem}{Theorem}[section]
\newtheorem{lemma}{Lemma}[section]
\newtheorem{conjecture}{Conjecture}[section]
\newtheorem{example}{Example}[section]
\begin{document}
\title{On Panja-Prasad conjecture}
\thanks{}
\keywords{Waring's problem, polynomial, upper triangular matrix algebra, infinite field}
\subjclass[2020]{16R10, 16S50.}
\thanks{}
\maketitle
\begin{center}
Qian Chen\\
Department of Mathematics, Shanghai Normal University,
Shanghai 200234, China.\\
Email address: qianchen0505@163.com
\end{center}
\maketitle
\begin{abstract}
In the present paper we shall investigate the Waring's problem for upper triangular matrix algebras. The main result is the following: Let $n\geq 2$ and $m\geq 1$ be integers. Let $p(x_1,\ldots,x_m)$ be a noncommutative polynomial with zero constant term over an infinite field $K$. Let $T_n(K)$ be the set of all $n\times n$ upper triangular matrices over $K$. Suppose $1<r<n-1$, where $r$ is the order of $p$. We have that $p(T_n(K))+p(T_n(K))=J^r$, where $J$ is the Jacobson radical of $T_n(K)$. If $r=n-2$, then $p(T_n(K))=J^{n-2}$. This gives a definitive solution of a conjecture proposed by Panja and Prasad.
\end{abstract}

\section{Introduction}

The classical Waring's problem, proposed by Edward Waring in 1770 and solved
by David Hilbert in 1909, asks whether for every positive integer $k$ there exists a positive integer $g(k)$ such that every positive integer can be expressed as a sum of $g(k)$ $k$th powers of nonnegative integers. Various extensions and variations of this problem have been studied by different groups of mathematicians. For example, in 2011 Larsen, Shalev, and Tiep \cite{Larsen} proved that given a word $w=w(x_1,\ldots,x_m)\neq 1$,
every element in any finite non-abelian simple group $G$ of sufficiently high order can be written as the product of two elements
from $w(G)$, the image of the word map induced by $w$. This is a definitive
solution of the Waring problem for finite simple groups which covers several
partial solutions obtained earlier. In particular, in 2009 Shalev \cite{Shalev} proved
that, under the same assumptions, every element in $G$ is the product of three
elements from $w(G)$.

In 1992, Helmke \cite{Hel} discussed Waring's problem for binary forms. In 2002, Fontanari \cite{Fon} investigated Waring's problem for many forms and Grassmann defective varieties. In 2019, Karabulut \cite{Kar} ontained explicit results for Waring's problem over general finite rings, especially matrix rings over finite fields.

Let $n\geq 2$ be an integer. Let $K$ be a field and let $K\langle X\rangle$ be the free associative algebra over $K$, freely generated by the countable set $X=\{x_1,x_2,\ldots\}$ of
noncommtative variables. We refer to the elements of $K\langle X\rangle$ as
polynomials.

Let $p(x_1,\ldots,x_m)\in K\langle X\rangle$. Let $\mathcal{A}$ be an algebra over $K$. The set
\[
p(\mathcal{A})=\{p(a_1,\ldots,a_m)~|~a_1,\ldots,a_m\in \mathcal{A}\}
\]
is called the image of $p$ (in $\mathcal{A}$).

In 2020, Bre\v{s}ar \cite{Bresar1} initiated the study of various Waring's problems for matrix algebras. One of the main results \cite[Theorem 3.18]{Bresar1}, states that if $\mathcal{A}=M_n(K)$,
where $n\geq 2$ and $K$ is an algebraically closed field with characteristic $0$,
and $f$ is a noncommutative polynomial which is neither an identity nor a central polynomial of $\mathcal{A}$, then
$f(\mathcal{A})-f(\mathcal{A})$ contains all square-zero matrices in $\mathcal{A}$. Using the fact that every trace zero
matrix is a sum of four square-zero matrices \cite{Pazzis}, it follows that every trace
zero matrix in $\mathcal{A}$ is a sum of four matrices from $f(\mathcal{A})-f(\mathcal{A})$ \cite[Corollary 3.19]{Bresar1}. Furthermore, Bre\v{s}ar and \v{S}emrl \cite{B1} proved that any traceless matrix can be written as sum of two
matrices from $f(M_n(\mathcal{C}))-f(M_n(\mathcal{C}))$, where $\mathcal{C}$ is the complex field and $f$ is neither an identity nor a central polynomial for
$M_n(\mathcal{C})$. Recently, they have proved that if $\alpha_1, \alpha_2,\alpha_3\in \mathcal{C}\setminus\{0\}$ and
$\alpha_1+\alpha_2+\alpha_3=0$, then any traceless matrix over $\mathcal{C}$ can be written as $\alpha_1A_1+\alpha_2A_2+\alpha_3A_3$, where $A_i\in f(M_n(\mathcal{C}))$ (see \cite{B2}).

By $T_n(K)$ we denote the set of all $n\times n$ upper triangular
matrices over $K$. By $T_n(K)^{(0)}$ we denote the set of all
$n\times n$ strictly upper triangular matrices over $K$. More generally, if $t\geq 0$, the set of all upper triangular
matrices whose entries $(i,j)$ are zero, for $j- i\leq t$, will be
denoted by $T_n(K)^{(t)}$. It is easy to check that $J^t=T_n(K)^{(t-1)}$, where $t\geq 1$ and $J$ is the Jacobson radical of $T_n(K)$ (see \cite[Example 5.58]{Bre1}).

Let $p(x_1,\ldots,x_m)$ be a noncommutative polynomial with zero constant term over $K$. We define its \textbf{order}
as the least positive integer $r$ such that $p(T_r(K))=\{0\}$ but $p(T_{r+1}(K))\neq\{0\}$. Note that $T_1(K)=K$. We say that $p$ has order $0$ if $p(K)\neq\{0\}$. We denote the order of $p$ by ord$(p)$.  For a detailed introduction of the order of polynomials we refer the reader to the book \cite[Chapter 5]{Drensky}.

In 2023, Panja and Prasad \cite{PP} initiated Waring's problems for upper triangular matrix algebras. More precisely, they obtained the following result:

\begin{theorem}\cite[Theorem 5.18(iii)]{PP}\label{PP}
Let $n\geq 2$ and $m\geq 1$ be integers. Let $p(x_1,\ldots,x_m)$ be a
polynomial with zero constant term in non-commutative variables over an algebraically closed field $K$. Set $r=\emph{ord}(p)$. If $1<r<n-1$, then $p(T_n(K))\subseteq T_n(K)^{(r-1)}$, and equality might not hold in general. Furthermore, for every $n$ and $r$ there exists $d$ such that each element of $T_n(K)^{(r-1)}$ can be written as a sum of $d$ many elements from $p(T_n(K))$
\end{theorem}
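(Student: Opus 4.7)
My plan is to prove the three assertions in the order stated.

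\textbf{Containment $p(T_n(K))\subseteq T_n(K)^{(r-1)}$.} I would use a principal-submatrix argument. For any $1\le i\le j\le n$ and any $A_1,\ldots,A_m\in T_n(K)$, the $(i,j)$-entry of any noncommutative monomial $A_{s_1}\cdots A_{s_t}$ depends only on the principal submatrices $B_s=A_s[\{i,\ldots,j\},\{i,\ldots,j\}]\in T_{j-i+1}(K)$. Hence the $(i,j)$-entry of $p(A_1,\ldots,A_m)$ equals the $(1,j-i+1)$-entry of $p(B_1,\ldots,B_m)$. If $j-i\le r-1$ then $j-i+1\le r$, so $p(B_1,\ldots,B_m)=0$ by the definition of $r=\mathrm{ord}(p)$, and thus $p(A_1,\ldots,A_m)\in T_n(K)^{(r-1)}$.

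\textbf{Failure of equality.} I would exhibit an explicit counterexample: a polynomial $p$ of order $r$ whose leading multi-homogeneous component vanishes on enough matrix-unit substitutions that its image sits strictly inside $T_n(K)^{(r-1)}$. For instance, a suitable power of a commutator, or a polynomial whose only nonzero multi-homogeneous components are of high total degree, can be checked directly on matrix-unit substitutions to miss a prescribed entry on the $(r-1)$-th superdiagonal.

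\textbf{Bounded-sum generation of $T_n(K)^{(r-1)}$.} This is the substantive part. View
\[
\Phi\colon T_n(K)^m\longrightarrow T_n(K)^{(r-1)},\qquad (A_1,\ldots,A_m)\longmapsto p(A_1,\ldots,A_m)
\]
as a morphism of affine varieties over the algebraically closed field $K$. By Chevalley's theorem its image is constructible. The plan has two stages. First, show that the Zariski closure of $\Phi(T_n(K)^m)$ equals the affine space $T_n(K)^{(r-1)}$, i.e., the image is Zariski-dense. Second, once density is established, $\Phi(T_n(K)^m)$ contains a nonempty Zariski-open subset $U$ of $T_n(K)^{(r-1)}$; for any $v\in T_n(K)^{(r-1)}$ the nonempty open set $v-U$ meets $\Phi(T_n(K)^m)$ by density, giving $v=x+u$ with $x,u\in\Phi(T_n(K)^m)$, so in fact $d=2$ suffices (and in particular some finite $d$ exists as claimed).

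For the density step I would compute the differential of $\Phi$ at a carefully chosen basepoint $(A_1^0,\ldots,A_m^0)$ built from strictly upper-triangular matrix units together with a generic diagonal part, and show that the image of $d\Phi$ spans $T_n(K)^{(r-1)}$. A multi-homogeneous decomposition of $p$ together with Vandermonde-type interpolation in a scalar parameter (valid since $K$ is infinite) lets one isolate the leading multilinear contribution of $p$; the order hypothesis $r=\mathrm{ord}(p)$ forces this contribution to be nonzero and to reach every superdiagonal with $j-i\ge r$. I expect this density/multi-homogeneous analysis to be the main obstacle, since it requires an organized structural study of $p$ tuned to the order condition and a verification that the resulting derivative hits every target matrix unit $E_{ij}$ with $r\le j-i\le n-1$. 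In contrast, the passage from Zariski density to a finite sumset covering is formal and uses algebraic closure of $K$ only to guarantee that constructible dense subsets contain dense open subsets.
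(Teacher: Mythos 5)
Your containment argument is correct and is in fact cleaner than what is done here: compression to a contiguous principal block $\{i,\ldots,j\}$ is an algebra homomorphism on upper triangular matrices, and since $p(T_r(K))=\{0\}$ forces $p(T_k(K))=\{0\}$ for all $k\le r$ (embed $T_k$ as a corner block), the $(i,j)$-entries with $j-i\le r-1$ vanish; the paper instead reaches this through the explicit entry formulas of Lemma \ref{M1} and Lemma \ref{L2.1}(ii). Your formal reductions at the end are also sound over an algebraically closed field: the image is constructible by Chevalley, a dense constructible subset of the affine space $T_n(K)^{(r-1)}$ contains a dense open set $U$, and $v-U$ meets the image, giving $d=2$. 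Note, however, that the statement you were asked to prove is quoted from \cite{PP} and is not reproved in this paper; the paper proves the stronger Theorem \ref{T1} by an entirely constructive route valid over any infinite field, with no algebraic geometry and no algebraic closure.

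The genuine gap is the density step, which you explicitly defer, and which is precisely where all the content lies. The order hypothesis directly yields only that some coefficient polynomial $p_{i_1'\cdots i_r'}$ attached to the length-$r$ words is not identically zero (Lemma \ref{L2.1}(iii)); these coefficients are polynomial functions of the diagonal entries, so to make your differential (or any first-order analysis) hit every unit $E_{ij}$ with $j-i\ge r$ you must choose diagonals making many such coefficients simultaneously nonzero and then solve a triangular system of conditions entry by entry. That is exactly what Lemmas \ref{L2.3}, \ref{L2.4}, \ref{L3.1} and the recursive induction of Lemma \ref{L3.2} accomplish, and it occupies most of the paper; nothing in your proposal identifies the basepoint or verifies surjectivity of $d\Phi$. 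There is also a characteristic issue you do not address: the theorem has no restriction on $\mathrm{char}\,K$, and while surjectivity of the differential at one point does imply dominance over a perfect field, in positive characteristic a dominant polynomial map need not have a surjective differential anywhere, so it is not clear your criterion is even capable of certifying density for every admissible $p$; the paper sidesteps this entirely by exhibiting explicit preimages for every matrix whose $r$-th superdiagonal entries are all nonzero (Lemma \ref{L3.2}), which gives an open dense subset of the image and then $d=2$ by the two-summand splitting in the proof of Theorem \ref{T1}. Finally, your ``equality might not hold'' part is only a heuristic; an actual verification such as Example \ref{EX} (with $p=[x,y]^r$) is needed.
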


They proposed the following conjecture:

\begin{conjecture}\cite[Conjecture]{PP}\label{PP2}
Let $p(x_1,\ldots,x_m)$ be a noncommutative polynomial with zero constant term over an algebraically closed field $K$. Suppose ord$(p)=r$, where $1<r<n-1$. Then
$p(T_n(K))+p(T_n(K))=T_n(K)^{(r-1)}$.
\end{conjecture}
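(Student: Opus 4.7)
\section*{Proof plan}

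The inclusion $p(T_n(K))+p(T_n(K)) \subseteq J^r$ is immediate from Theorem~\ref{PP}, so the entire task is to establish $J^r \subseteq p(T_n(K))+p(T_n(K))$. The plan is a three-step construction: first reduce to a tractable form of $p$, then evaluate $p$ on a carefully parameterized family of upper triangular matrices, and finally use a second, shifted copy of the same family to absorb whatever a single evaluation cannot control.

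For the reduction, I would exploit that $K$ is infinite to split $p$ into its multihomogeneous components and handle each in turn, then invoke the structural results of \cite[Chapter~5]{Drensky} on polynomial identities of $T_n(K)$. These pin down which multilinear skeletons of $p$ survive on $T_n(K)$; the hypothesis $\mathrm{ord}(p) = r$ forces each surviving monomial, when evaluated on a product of upper triangular matrices, to contribute to $J^r$ through at least $r$ off-diagonal transitions. Writing the substitutions as $A_k = D_k + N_k$ with $D_k$ diagonal and $N_k$ strictly upper triangular, the $(i,i+r)$-entry of $p(A_1,\ldots,A_m)$ becomes a polynomial in the entries of the $N_k$'s whose coefficients are character-like products of differences of diagonal entries of the $D_k$'s. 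Choosing the $D_k$ generically in $K$ makes these leading coefficients nonzero, and a linear-algebra argument then realizes any prescribed value on the top superdiagonal. The entries at positions $(i,j)$ with $j-i > r$ are, however, not independent: they are forced by the same substitution and will generally not match the corresponding entries of the target $a \in J^r$. This is where the second summand enters: I would perform a second evaluation with rescaled or shifted parameters, chosen to cancel exactly the residual discrepancy on the higher superdiagonals while contributing nothing net on the $(i,i+r)$-entries.

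The main obstacle is precisely this coupling between superdiagonals and the corresponding dimension count: the image of a single evaluation map into $J^r$ can have positive codimension, and one must verify that the sum of two generically independent translates of this image fills \emph{all} of $J^r$. I expect an induction on $n - 1 - r$ to be effective here, since the strictly upper triangular part of $p(T_n(K))$ inherits the structure of $p(T_{n'}(K))$ for smaller $n'$, allowing lower superdiagonals to be handled by the induction hypothesis and only the top one by direct construction. The strengthening at $r = n-2$ follows from a direct calculation: in this extremal regime $J^{n-2}$ is spanned by just $e_{1,n-1}$, $e_{2,n}$, $e_{1,n}$, and an explicit three-parameter substitution produces a single evaluation of $p$ that already ranges over the whole three-dimensional space $J^{n-2}$, so the second summand is unnecessary and one obtains $p(T_n(K)) = J^{n-2}$.
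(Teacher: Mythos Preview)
Your overall framework---writing inputs as diagonal plus strictly upper triangular, choosing the diagonals generically so that the leading ``transition'' coefficients are nonzero, and treating the case $r=n-2$ by a direct three-entry calculation---matches the paper's setup. But the mechanism you propose for the second summand is where the plan goes wrong, and it is the heart of the problem.

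You want the second evaluation to ``cancel exactly the residual discrepancy on the higher superdiagonals while contributing nothing net on the $(i,i+r)$-entries.'' That means you need to realize, as a single value of $p$, a matrix lying in $J^{r+1}$ (zero on the $r$-th superdiagonal) with \emph{prescribed} entries on the higher superdiagonals. This is precisely the regime where the image can fail to be all of the expected set: Example~\ref{EX} (and already \cite[Example~5.7]{PP}) exhibits polynomials with $\mathrm{ord}(p)=r$ for which $p(T_n(K))\neq T_n(K)^{(r-1)}$, and the obstruction occurs exactly for targets with zeros on the $r$-th superdiagonal. So the second evaluation you describe need not exist. Your induction on $n-1-r$ does not resolve this: the inductive hypothesis concerns $p$ evaluated on smaller $T_{n'}(K)$, but the correction you need lives in $T_n(K)$ and must be a value of $p$ there, with a vanishing $r$-th superdiagonal---which the inductive hypothesis says nothing about.

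The paper avoids this trap by reversing the roles. Its key Lemma~\ref{L3.2} shows that any matrix in $T_n(K)^{(r-1)}$ whose $r$-th superdiagonal entries are \emph{all nonzero} lies in $p(T_n(K))$---and not just on the $r$-th superdiagonal, but exactly, on every entry. The proof is a recursive construction: after fixing the diagonals generically (as you also do), one orders the off-diagonal unknowns $x^{(i'_k)}_{r+s-1,\,r+s+t}$ lexicographically in $(t,s)$ and solves for them one block at a time, using a small linear-algebra lemma (Lemma~\ref{L3.1}) to hit the target entry at position $(s,r+s+t)$ while keeping certain auxiliary ``leading coefficient'' polynomials $f_{s',r+s'-v}$ nonzero so that later steps remain solvable. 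The nonvanishing of the $r$-th superdiagonal of the target is what makes the first layer of this recursion work. Once Lemma~\ref{L3.2} is in hand, the two-summand decomposition is trivial: given any $A\in T_n(K)^{(r-1)}$, choose $b_{s,r+s}\in K$ with $b_{s,r+s}\neq 0$ and $a_{s,r+s}-b_{s,r+s}\neq 0$ (possible since $K$ is infinite), and write $A=B+C$ where $B$ and $C$ both have all $r$-th superdiagonal entries nonzero. So the split is not ``main part plus correction'' but ``two generic-looking pieces,'' each individually realizable.

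Your plan for $r=n-2$ is essentially correct and agrees with the paper's Lemma~\ref{L10}, though the paper still has to split into two cases according to whether $a'_{1,n-1}=0$.
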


We remark that ``$p(T_n(K))+p(T_n(K))=T_n(K)^{(r-1)}$" can be replaced by ``$p(T_n(K))=T_n(K)^{(r-1)}$" in Theorem \ref{PP2} in the case of $p$ is a linear polynomial (in particular, multilinear polynomial) and $K$ is an infinite field (see \cite{Mello, Luo, Wang2022}).

In the present paper, we shall prove the following main result of the paper, which gives a definitive solution of Conjecutue \ref{PP2}.

\begin{theorem}\label{T1}
Let $n\geq 2$ and $m\geq 1$ be integers. Let $p(x_1,\ldots,x_m)$ be a noncommutative polynomial with zero constant term over an infinite field $K$. Suppose ord$(p)=r$, where $1<r<n-1$. We have that $p(T_n(K))+p(T_n(K))=T_n(K)^{(r-1)}$. In particular, if $r=n-2$, then $p(T_n(K))=T_n(K)^{(n-3)}$.
\end{theorem}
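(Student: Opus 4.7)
The inclusion $p(T_n(K)) + p(T_n(K)) \subseteq T_n(K)^{(r-1)}$ is immediate from Theorem~\ref{PP}, so the real task is the reverse inclusion (with the sharper statement without the sum of two when $r = n-2$). The starting point is a base witness: since $\mathrm{ord}(p) = r$ and $p(T_{r+1}(K)) \subseteq T_{r+1}(K)^{(r-1)} = K E_{1, r+1}$ is nonzero, I can choose $A_1^{0}, \ldots, A_m^{0} \in T_{r+1}(K)$ with $p(A^{0}) = \alpha E_{1, r+1}$ for some $\alpha \in K^{\times}$. Embedding $T_{r+1}(K)$ as the block of $T_n(K)$ on rows/columns $\{i, \ldots, i+r\}$ then places $\alpha E_{i, i+r}$ inside $p(T_n(K))$ for every $1 \le i \le n-r$.

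The next key structural tool is an orthogonal-block trick. If the evaluations $A^{(1)}, A^{(2)}$ are supported on disjoint row/column subsets of $\{1, \ldots, n\}$, then $A^{(1)}_i A^{(2)}_j = 0 = A^{(2)}_j A^{(1)}_i$ for all $i, j$, so every monomial $\mu$ of $p$ satisfies $\mu(A^{(1)} + A^{(2)}) = \mu(A^{(1)}) + \mu(A^{(2)})$, and since $p$ has zero constant term, $p(A^{(1)} + A^{(2)}) = p(A^{(1)}) + p(A^{(2)})$. Iterated, this collapses witnesses supported on many pairwise disjoint blocks into a single $p$-value. Coupled with a scaling/Vandermonde argument on the multi-homogeneous decomposition $p = \sum_{\vec d} p_{\vec d}$ (using that $K$ is infinite and that each nonzero $p_{\vec d}$ still has order $\geq r$), I aim to realize $\sum_i c_i E_{i, i+r}$ for arbitrary $c_i \in K$ over positions $i$ with pairwise disjoint blocks as a single element of $p(T_n(K))$.

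The substantive obstacle is to extend this to all of $T_n(K)^{(r-1)}$ using only a \emph{sum of two} $p$-values. My plan is an induction on antidiagonal level: given $M \in T_n(K)^{(r-1)}$, decompose $M = M_r + M'$ with $M_r$ on the $r$-th antidiagonal and $M' \in T_n^{(r)}$; find a first $p$-value $p(A)$ matching $M$ up to an error in $T_n^{(r)}$, then produce a second $p$-value $p(B)$ correcting the error. The genuinely hard step is this second correction, because $T_n^{(r)}$ need not lie inside $p(T_n(K))$ a priori. I plan to handle it by placing witnesses in larger subalgebras $T_{r'+1}(K)$ for $r < r' \le n-1$: here the target $T_{r'+1}(K)^{(r-1)}$ has dimension strictly greater than $1$, giving enough parameters under the infinite-field hypothesis to arrange the $r$-th antidiagonal of the $p$-value to vanish while freely assigning higher-antidiagonal entries, after which a further iteration of the orthogonal-block trick propagates the construction up through all antidiagonals. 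Controlling overlapping blocks---where the cross terms of $p(A^{(1)} + A^{(2)})$ can contaminate the $r$-th antidiagonal rather than staying strictly above it---is where I expect to invest the most care.

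For the sharper conclusion $p(T_n(K)) = T_n(K)^{(n-3)}$ when $r = n - 2$, the target $T_n(K)^{(n-3)} = \mathrm{span}(E_{1, n-1}, E_{1, n}, E_{2, n})$ is only three-dimensional. Witnesses in the two $(n-1) \times (n-1)$-blocks produce $\alpha E_{1, n-1}$ and $\alpha E_{2, n}$ respectively, and a full evaluation in $T_n(K)$ additionally populates the $(1, n)$-slot; I expect the resulting polynomial map into $K^3$ to be surjective by a direct three-parameter analysis exploiting the infinite-field hypothesis, so that a single $p$-value suffices and no sum of two is needed here.
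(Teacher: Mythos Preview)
Your orthogonal-block trick is correct but far weaker than you need. Two block embeddings of $T_{r+1}(K)$ on index sets $\{i,\ldots,i+r\}$ and $\{j,\ldots,j+r\}$ are disjoint only when $|i-j|>r$, so the trick never lets you combine witnesses for \emph{adjacent} positions $(s,s+r)$ and $(s+1,s+r+1)$ on the $r$-th superdiagonal. You flag this (``controlling overlapping blocks \ldots\ is where I expect to invest the most care'') but provide no mechanism; in fact cross terms from overlapping evaluations can and do land on the $r$-th superdiagonal, not merely above it, so there is no soft reason the contamination is harmless. The same overlap problem breaks your $r=n-2$ sketch: the two $(n-1)\times(n-1)$ blocks share rows and columns $2,\ldots,n-1$.

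Your second-value correction is also unsupported. You claim one can produce a $p$-value whose $r$-th superdiagonal vanishes while freely prescribing the higher superdiagonals, but nothing in your toolkit establishes this, and Example~\ref{EX} shows $p(T_n(K))$ can be rigid enough that naive prescriptions fail. The paper avoids both difficulties by proving a single, sharper lemma (Lemma~\ref{L3.2}): every matrix in $T_n(K)^{(r-1)}$ whose $r$-th superdiagonal entries are \emph{all nonzero} already lies in $p(T_n(K))$. The proof is a direct recursive construction: fix generic diagonals $\bar c_1,\ldots,\bar c_n$ making the commutative coefficient polynomials $p_{i_1'\cdots i_r'}$ nonvanishing, then solve for the off-diagonal entries $x_{r+s-1,r+s+t}^{(i_k')}$ one antidiagonal at a time, using at each step a linear-algebra lemma (Lemma~\ref{L3.1}) that simultaneously hits one prescribed value and keeps finitely many auxiliary linear forms nonzero. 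Once Lemma~\ref{L3.2} is in hand, the sum-of-two statement is trivial: write $A=B+C$ with $b_{s,r+s}$ and $c_{s,r+s}=a_{s,r+s}-b_{s,r+s}$ both nonzero (possible since $K$ is infinite), and apply the lemma to $B$ and $C$ separately. The $r=n-2$ case is handled by a separate explicit three-entry computation (Lemma~\ref{L10}). Your block/Vandermonde apparatus does not substitute for this recursive solve.
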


We organize the paper as follows: In Section $2$ we shall give some prelimiaries. We shall slightly modify some results in \cite{Chen2023,FK,Jac}, which will be used in the proof of Theorem \ref{T1}. In Section $3$ we shall give the proof of Theorem \ref{T1} by using some new arguments, for example, the compatibility of variables and recursive polynomials. In the end of the paper we give an example, which extends \cite[Example 5.7]{PP}.

\section{prelimiaries}

Let $\mathcal{N}$ be the set of all positive integers. Let $m\in \mathcal{N}$. Let $K$ be a field. Set $K^*=K\setminus\{0\}$. For any $k\in \mathcal{N}$ we set
\[
T_k=\left\{(i_1,\ldots,i_k)\in \mathcal{N}^k~|~1\leq i_1,\ldots,i_k\leq m\right\}.
\]

Let $p(x_1,\ldots,x_m)$ be a noncommutative polynomial with zero constant term over
$K$. We can write
\begin{equation}\label{e1}
p(x_1,\ldots,x_m)=\sum\limits_{k=1}^d\left(\sum\limits_{(i_1,i_2,\ldots,i_k)\in T_k}\lambda_{i_1i_2\cdots i_k}x_{i_1}x_{i_2}\cdots x_{i_k}\right),
\end{equation}
where $\lambda_{i_1i_2\cdots i_k}\in K$ and $d$ is the degree of $p$.

We begin with the following result, which is slightly different from \cite[Lemma 3.2]{Chen2023}. We give its proof for completeness.

\begin{lemma}\label{M1}
For any $u_i=(a_{jk}^{(i)})\in T_n(K)$, $i=1,\ldots,m$, we set
\[
\bar{a}_{jj}=(a_{jj}^{(1)},\ldots,a_{jj}^{(m)}),
\]
where $j=1,\ldots,n$. We have that

\begin{equation}\label{e2}
p(u_1,\ldots,u_m)=\left(
\begin{array}{cccc}
p(\bar{a}_{11}) & p_{12} & \ldots & p_{1n}\\
0 & p(\bar{a}_{22}) & \ldots & p_{2n}\\
 \vdots & \vdots & \ddots & \vdots\\
 0 & 0 & \ldots & p(\bar{a}_{nn})
\end{array} \right),
\end{equation}
where
\[
p_{st}=\sum\limits_{k=1}^{t-s}\left(\sum\limits_{\substack{s=j_1<j_2<\cdots <j_{k+1}=t\\(i_1,\ldots,i_k)\in T_k}}
p_{i_1\cdots i_k}(\bar{a}_{j_1j_1},\ldots,\bar{a}_{j_{k+1}j_{k+1}})a_{j_1j_2}^{(i_1)}\cdots a_{j_{k}j_{k+1}}^{(i_k)}\right)
\]
for all $1\leq s\leq t\leq n$, where $p_{i_1,\ldots,i_k}(z_1,\ldots,z_{m(k+1)})$, $1\leq i_1,i_2,\ldots,i_k\leq m$, $k=1,\ldots,n-1$, is a
commutative polynomial over $K$.
\end{lemma}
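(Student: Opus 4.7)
The plan is to expand $p(u_1,\ldots,u_m)$ monomial-by-monomial using (\ref{e1}), compute each matrix entry via the standard product formula for upper triangular matrices, and then regroup the terms according to the strictly increasing ``skeleton'' of each summation chain.

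First I would establish the single-monomial entry formula
\[
(u_{i_1}\cdots u_{i_d})_{st}=\sum_{\substack{s=\ell_1\leq\ell_2\leq\cdots\leq\ell_{d+1}=t}} a_{\ell_1\ell_2}^{(i_1)}a_{\ell_2\ell_3}^{(i_2)}\cdots a_{\ell_d\ell_{d+1}}^{(i_d)}
\]
by induction on $d$, using only that each $u_i$ is upper triangular; in particular, $p(u_1,\ldots,u_m)$ is itself upper triangular. For the diagonal case $s=t$, only the constant chain $\ell_1=\cdots=\ell_{d+1}=s$ survives, contributing $a_{ss}^{(i_1)}\cdots a_{ss}^{(i_d)}$. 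Since the $a_{ss}^{(i)}$ lie in $K$ they commute, so summing over all monomials of $p$ yields $p(\bar{a}_{ss})$ interpreted as a commutative polynomial, which gives the diagonal of (\ref{e2}).

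For $s<t$, I would attach to each weakly increasing chain $s=\ell_1\leq\cdots\leq\ell_{d+1}=t$ its skeleton $s=j_1<j_2<\cdots<j_{k+1}=t$ of distinct values, where $k\in\{1,\ldots,t-s\}$ counts the strict ascents. Each factor of the monomial becomes either (i) a diagonal scalar $a_{j_r j_r}^{(\alpha)}$ at some level $j_r$, or (ii) a ``jump'' entry $a_{j_r j_{r+1}}^{(\beta)}$. Fix the skeleton and the tuple $(i_1,\ldots,i_k)\in T_k$ used at the $k$ jumps. Summing over all monomials of $p$ and over all ways to insert diagonal blocks between successive jumps, and using that the diagonal scalars commute, the coefficient of the jump product $a_{j_1 j_2}^{(i_1)}\cdots a_{j_k j_{k+1}}^{(i_k)}$ collapses into a commutative polynomial in the $m(k+1)$ scalars $a_{j_r j_r}^{(1)},\ldots,a_{j_r j_r}^{(m)}$, $r=1,\ldots,k+1$. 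Since this polynomial depends only on $p$ and on the jump-index tuple, not on the positions $j_r$, it may be written uniformly as $p_{i_1\cdots i_k}(\bar{a}_{j_1 j_1},\ldots,\bar{a}_{j_{k+1} j_{k+1}})$; summing over $k$, the skeletons, and the tuples $(i_1,\ldots,i_k)$ reproduces the claimed formula for $p_{st}$.

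The product formula and the diagonal case are routine; the main obstacle is the combinatorial bookkeeping in the off-diagonal case. Specifically, one must verify that the correspondence ``chain $\mapsto$ (skeleton, placement of diagonal positions, jump indices)'' is bijective, and that after resumming over the diagonal positions using commutativity of the scalar entries, the outcome is truly independent of the skeleton positions $j_r$, depending only on $(i_1,\ldots,i_k)$. Once both points are in place, (\ref{e2}) follows by direct comparison of summands.
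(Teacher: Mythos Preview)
Your proposal is correct and follows essentially the same route as the paper: compute each monomial product $u_{i_1}\cdots u_{i_k}$ entrywise via weakly increasing chains, sum over the monomials of $p$, read off the diagonal as $p(\bar a_{ss})$, and then regroup each off-diagonal entry by the strictly increasing skeleton of the chain so that the diagonal factors collapse into a commutative polynomial $p_{i_1\cdots i_k}$. Your ``skeleton'' language makes the regrouping step more explicit than the paper (which simply passes from the weak-chain sum to the strict-chain sum in one line), but the argument is the same.
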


\begin{proof}
Let $u_i=(a_{jk}^{(i)})\in T_n(K)$, where $i=1,\ldots,m$. For any $1\leq i_1,\ldots,i_k\leq m$, we easily check that
\[
u_{i_1}\cdots u_{i_k}=\left(
\begin{array}{cccc}
m_{11} & m_{12} & \ldots & m_{1n}\\
0 & m_{22} & \ldots & m_{2n}\\
 \vdots & \vdots & \ddots & \vdots\\
 0 & 0 & \ldots & m_{nn}
\end{array} \right),
\]
where
\[
m_{st}=\sum\limits_{s=j_1\leq j_2\leq \cdots \leq j_{k+1}=t}a_{j_1j_2}^{(i_1)}\cdots a_{j_{k}j_{k+1}}^{(i_k)}
\]
for all $1\leq s\leq t\leq n$. It follows from (\ref{e1}) that
\begin{eqnarray*}
\begin{split}
p(u_1,\ldots,u_m)&=\sum\limits_{k=1}^d\left(\sum\limits_{(i_1,\ldots,i_k)\in T_k}\lambda_{i_1\cdots i_k}u_{i_1}\cdots u_{i_k}\right)\\
&=\sum\limits_{k=1}^d\left(\sum\limits_{(i_1,\ldots,i_k)\in T_k}\lambda_{i_1\cdots i_k}\left(
\begin{array}{cccc}
m_{11} & m_{12} & \ldots & m_{1n}\\
0 & m_{22} & \ldots & m_{2n}\\
 \vdots & \vdots & \ddots & \vdots\\
 0 & 0 & \ldots & m_{nn}
\end{array} \right)\right)\\
&=\left(
\begin{array}{cccc}
p_{11} & p_{12} & \ldots & p_{1n}\\
0 & p_{22} & \ldots & p_{2n}\\
 \vdots & \vdots & \ddots & \vdots\\
 0 & 0 & \ldots & p_{nn}
\end{array} \right)
\end{split}
\end{eqnarray*}
where
\begin{eqnarray*}
\begin{split}
p_{st}&=\sum\limits_{k=1}^d\left(\sum\limits_{(i_1,\ldots,i_k)\in T_k}\lambda_{i_1\cdots i_k}m_{st}\right)\\
&=\sum\limits_{k=1}^d\left(\sum\limits_{(i_1,\ldots,i_k)\in T_k}\lambda_{i_1\cdots i_k}\left(
\sum\limits_{s=j_1\leq j_2\leq \cdots \leq j_{k+1}=t}a_{j_1j_2}^{(i_1)}\cdots a_{j_{k}j_{k+1}}^{(i_k)}\right)\right)\\
&=\sum\limits_{k=1}^{d}\left(\sum\limits_{\substack{s=j_1\leq j_2\leq \cdots \leq j_{k+1}=t\\(i_1,\ldots,i_k)\in T_k}}\lambda_{i_1i_2\cdots i_k}a_{j_1j_2}^{(i_1)}\cdots a_{j_{k}j_{k+1}}^{(i_k)}\right),
\end{split}
\end{eqnarray*}
where $1\leq s\leq t\leq n$. In particular
\begin{eqnarray*}
\begin{split}
p_{ss}&=\sum\limits_{k=1}^{d}\left(\sum\limits_{(i_1,\ldots,i_k)\in T_k}\lambda_{i_1i_2\cdots i_k}a_{ss}^{(i_1)}\cdots a_{ss}^{(i_k)}\right)\\
&=p(\bar{a}_{ss})
\end{split}
\end{eqnarray*}
for all $s=1,\ldots,n$, and
\begin{eqnarray*}
\begin{split}
p_{st}&=\sum\limits_{k=1}^{d}\left(\sum\limits_{\substack{s=j_1\leq j_2\leq \cdots \leq j_{k+1}=t\\(i_1,\ldots,i_k)\in T_k}}\lambda_{i_1i_2\cdots i_k}a_{j_1j_2}^{(i_1)}\cdots a_{j_{k}j_{k+1}}^{(i_k)}\right)\\
&=\sum\limits_{k=1}^{t-s}\left(\sum\limits_{\substack{s=j_1<j_2<\cdots <j_{k+1}=t\\(i_1,\ldots,i_k)\in T_k}}
p_{i_1i_2\cdots i_k}(\bar{a}_{j_1j_1},\ldots,\bar{a}_{j_{k+1}j_{k+1}})a_{j_1j_2}^{(i_1)}\cdots a_{j_{k}j_{k+1}}^{(i_k)}\right)
\end{split}
\end{eqnarray*}
for all $1\leq s<t\leq n$, where $p_{i_1,\ldots,i_k}(z_1,\ldots,z_{m(k+1)})$ is a commutative polynomial over $K$. This proves the result.
\end{proof}

The following result is slightly different from \cite[Lemma 3.4]{Chen2023}. We give its proof for completeness.

\begin{lemma}\label{L2.1}
Let $m\geq 1$ be an integer. Let $p(x_1,\ldots,x_m)$ be a polynomial with zero constant term over a field $K$.
Let $p_{i_1,\ldots,i_k}(z_1,\ldots,z_{m(k+1)})$ be a commutative polynomial of $p$ in (\ref{e2}), where $1\leq i_1,\ldots,i_k\leq m$, $1\leq k\leq n-1$. Suppose that \emph{ord}$(p)=r$, $1<r<n-1$. We have that
\begin{enumerate}
\item[(i)] $p(K)=\{0\}$;
\item[(ii)] $p_{i_1,\ldots,i_k}(K)=\{0\}$ for all $1\leq i_1,\ldots,i_k\leq m$, where $k=1,\ldots,r-1$. This implies that $p(T_n(K))\subseteq T_n(K)^{(r-1)}$;
\item[(iii)] $p_{i_1',\ldots,i_r'}(K)\neq\{0\}$ for some $1\leq i_1',\ldots,i_r'\leq m$.
\end{enumerate}
\end{lemma}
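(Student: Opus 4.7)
The plan is to establish (i)--(iii) in sequence by feeding carefully chosen families of matrices into the block formula (\ref{e2}) from Lemma \ref{M1}, and to identify the coefficient-extraction step in (ii) as the main technical point.

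For (i), I would test the assumption $p(T_r(K))=\{0\}$ on scalar matrices. Since the embedding $a\mapsto aI_r$ of $K$ into $T_r(K)$ is an algebra homomorphism and $p$ has zero constant term, setting $u_i=a_iI_r$ yields $p(u_1,\ldots,u_m)=p(a_1,\ldots,a_m)I_r=0$, so $p(a_1,\ldots,a_m)=0$ for every $(a_1,\ldots,a_m)\in K^m$.

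For (ii), fix $k\in\{1,\ldots,r-1\}$ and consider $u_1,\ldots,u_m\in T_n(K)$ whose only nonzero entries lie on the diagonal of the upper-left $(k+1)\times(k+1)$ block and on the superdiagonal positions $(s,s+1)$ for $1\leq s\leq k$. Because $k+1\leq r$, each such $u_i$ sits inside $T_r(K)$ via the standard upper-left embedding, so $p(u_1,\ldots,u_m)=0$. Under this support restriction the only strictly increasing path from $1$ to $k+1$ allowed in (\ref{e2}) is $1<2<\cdots<k+1$, and the $(1,k+1)$-entry collapses to
\[
0=\sum_{(j_1,\ldots,j_k)\in T_k}p_{j_1,\ldots,j_k}(\bar{a}_{11},\ldots,\bar{a}_{k+1,k+1})\,a_{12}^{(j_1)}a_{23}^{(j_2)}\cdots a_{k,k+1}^{(j_k)}.
\]
For distinct tuples $(j_1,\ldots,j_k)$ these monomials involve different indeterminates at each position and are therefore linearly independent. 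With the diagonal entries $\bar{a}_{ss}$ held fixed, the right-hand side becomes a polynomial in the superdiagonal indeterminates that vanishes on $K^{mk}$; since $K$ is infinite, this polynomial is identically zero, forcing $p_{j_1,\ldots,j_k}(\bar{a}_{11},\ldots,\bar{a}_{k+1,k+1})=0$ for every tuple. As the diagonal values were arbitrary, $p_{j_1,\ldots,j_k}(K)=\{0\}$. The inclusion $p(T_n(K))\subseteq T_n(K)^{(r-1)}$ drops out of (\ref{e2}) at once: for $t-s\leq r-1$ every summand in $p_{st}$ carries a factor $p_{i_1,\ldots,i_\ell}(\bar{a})$ with $\ell\leq r-1$, and the diagonal contributions vanish by (i).

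For (iii), the definition of ord$(p)$ supplies $u_1,\ldots,u_m\in T_{r+1}(K)$ with $p(u_1,\ldots,u_m)\neq 0$. Parts (i) and (ii) kill every entry in positions $(s,t)$ with $t-s<r$, so the only possibly nonzero entry is at $(1,r+1)$; in its expansion the terms with $\ell<r$ vanish by (ii) and the only strictly increasing path of length $r$ from $1$ to $r+1$ is $1<2<\cdots<r+1$, so
\[
0\neq\sum_{(i_1,\ldots,i_r)\in T_r}p_{i_1,\ldots,i_r}(\bar{a}_{11},\ldots,\bar{a}_{r+1,r+1})\,a_{12}^{(i_1)}\cdots a_{r,r+1}^{(i_r)}.
\]
Some summand must be nonzero, yielding a tuple $(i_1',\ldots,i_r')$ with $p_{i_1',\ldots,i_r'}(K)\neq\{0\}$. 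The one delicate point in the whole argument is the coefficient comparison in (ii), which rests on the infinite-field hypothesis to pass from vanishing of the polynomial function in the superdiagonal variables to vanishing of each coefficient $p_{j_1,\ldots,j_k}(\bar{a})$; once that step is granted, parts (i) and (iii) are essentially bookkeeping through the block formula.
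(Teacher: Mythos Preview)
Your argument tracks the paper's proof closely in parts (i) and (iii), and the overall strategy for (ii) is the same. There is, however, a gap in (ii): you invoke the hypothesis that $K$ is infinite to pass from the vanishing of the polynomial function in the superdiagonal variables to the vanishing of each coefficient $p_{j_1,\ldots,j_k}(\bar{a})$, but the lemma is stated over an \emph{arbitrary} field $K$, so that hypothesis is not available here (even though the main theorem later assumes it).

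The repair is immediate once you notice the special shape of the polynomial
\[
\sum_{(j_1,\ldots,j_k)\in T_k}p_{j_1,\ldots,j_k}(\bar{a}_{11},\ldots,\bar{a}_{k+1,k+1})\,a_{12}^{(j_1)}\cdots a_{k,k+1}^{(j_k)}:
\]
each monomial contains exactly one variable from each block $\{a_{s,s+1}^{(1)},\ldots,a_{s,s+1}^{(m)}\}$, so the substitution $a_{s,s+1}^{(j)}=1$ if $j=i_s'$ and $a_{s,s+1}^{(j)}=0$ otherwise isolates the single coefficient $p_{i_1',\ldots,i_k'}(\bar{a}_{11},\ldots,\bar{a}_{k+1,k+1})$. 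This is precisely the evaluation the paper uses (phrased there as a proof by contradiction: assume some $p_{i_1',\ldots,i_s'}$ is nonzero at a point and build exactly these $0$--$1$ matrices in $T_{s+1}(K)$), and it works over any field. With this adjustment your proof of (ii) coincides with the paper's.
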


\begin{proof}
It is clear that the statement (i) holds true. We now claim that the statement (ii) holds true. Suppose on the contrary that
\[
p_{i_1'\cdots i_s'}(K)\neq \{0\}
\]
for some $1\leq i_1',\ldots,i_s'\leq m$, where $1\leq s\leq r-1$. Then there exist $\bar{b}_{j}\in K^m$, where $j=1,\ldots,s+1$ such that
\[
p_{i_1'\cdots i_s'}(\bar{b}_{1},\ldots,\bar{b}_{s+1})\neq 0.
\]
We take $u_i=(a_{jk}^{(i)})\in T_{s+1}(K)$, $i=1,\ldots,m$, where
\[
\left\{
\begin{aligned}
\bar{a}_{jj}&=\bar{b}_{j},\quad\mbox{$j=1,\ldots,s+1$};\\
a_{k,k+1}^{(i_k')}&=1,\quad\mbox{$k=1,\ldots,s$};\\
a_{jk}^{(i)}&=0,\quad\mbox{otherwise}.
\end{aligned}
\right.
\]
It follows from (\ref{e2}) that
\[
p_{1,s+1}=p_{i_1'\cdots i_s'}(\bar{b}_{1},\ldots,\bar{b}_{s+1})\neq 0.
\]
This implies that $p(T_{s+1}(K))\neq\{0\}$, a contradiction. This proves the statement (ii).

We finally claim that the statement (iii) holds true. Note that $p(T_{1+r}(K))\neq\{0\}$. Thus,
we have that there exist $u_{i}=(a_{jk}^{(i)})\in T_{1+r}(K)$, $i=1,\ldots,m$, such that
\[
p(u_1,\ldots,u_m)=(p_{st})\neq 0.
\]
In view of the statement (ii) we get that
\[
p_{1,r+1}=\sum\limits_{\substack{1=j_1<j_2< \cdots <j_{r+1}=r+1\\(i_1,\ldots,i_r)\in T_k}}p_{i_1i_2\cdots i_r}(\bar{a}_{j_1j_1},\ldots,\bar{a}_{j_{r+1}j_{r+1}})a_{j_1j_2}^{(i_1)}\cdots a_{j_{r}j_{r+1}}^{(i_r)}\neq 0.
\]
This implies that $p_{i_1',\ldots,i_r'}(K)\neq \{0\}$ for some $1\leq i_1',\ldots,i_r'\leq m$. This proves the statement (iii). The proof of the result is complete.
\end{proof}

The following well known result will be used in the proof of the rest results.

\begin{lemma}\cite[Theorem 2.19]{Jac}\label{L2.2}
Let $K$ be an infinite field. Let $f(x_1,\ldots,x_m)$ be a nonzero commutative polynomial over $K$. Then there exist $a_1,\ldots,a_m\in K$ such that $f(a_1,\ldots,a_m)\neq 0$.
\end{lemma}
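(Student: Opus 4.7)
The plan is to proceed by induction on the number of variables $m$. For the base case $m=1$, any nonzero polynomial $f(x_1)\in K[x_1]$ of degree $d$ has at most $d$ roots in $K$. Since $K$ is infinite, there are infinitely many elements of $K$ that are not roots of $f$, so one can pick any such $a_1$ with $f(a_1)\neq 0$.

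For the inductive step, I would regard $f(x_1,\ldots,x_m)$ as a polynomial in $x_m$ with coefficients in the ring $K[x_1,\ldots,x_{m-1}]$, writing
\[
f(x_1,\ldots,x_m)=\sum_{i=0}^{d}g_i(x_1,\ldots,x_{m-1})\,x_m^i,
\]
where each $g_i\in K[x_1,\ldots,x_{m-1}]$. Since $f\neq 0$, at least one coefficient $g_j$ is a nonzero polynomial in $m-1$ variables. The induction hypothesis applied to $g_j$ furnishes $a_1,\ldots,a_{m-1}\in K$ with $g_j(a_1,\ldots,a_{m-1})\neq 0$. Substituting these values leaves $f(a_1,\ldots,a_{m-1},x_m)$ as a nonzero one-variable polynomial in $x_m$, and the base case supplies $a_m\in K$ with $f(a_1,\ldots,a_m)\neq 0$.

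There is no serious obstacle here; the entire substance of the argument is the base case, which combines the fact that a nonzero polynomial over a field has finitely many roots with the infiniteness of $K$. The hypothesis that $K$ is infinite is essential: over a finite field $K$ the polynomial $\prod_{c\in K}(x_1-c)\in K[x_1]$ is nonzero as a formal polynomial yet vanishes identically as a function on $K$, so the conclusion would fail without the infiniteness assumption.
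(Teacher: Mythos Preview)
Your proof is correct and is the standard inductive argument for this classical fact. The paper does not actually supply a proof of this lemma; it simply quotes it as a well-known result with a citation to Jacobson's \emph{Basic Algebra I}, so there is nothing to compare against.
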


The following result is slightly defferent from \cite[Lemma 3.5]{Chen2023}. We give its proof for completeness.

\begin{lemma}\label{L2.3}
Let $n,s$ be integers with $1\leq s\leq n$. Let $p(x_1,\ldots,x_s)$ be a nonzero commutative polynomials over an infinite field $K$.  We have that there exist $a_1,\ldots,a_n\in K$ such that
\begin{eqnarray*}
\begin{split}
p(a_{i_1},\ldots,a_{i_s})\neq 0
\end{split}
\end{eqnarray*}
for all $1\leq i_1<\cdots<i_s\leq n$.
\end{lemma}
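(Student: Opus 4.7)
The plan is to reduce the statement to a single application of Lemma \ref{L2.2} by multiplying together all the polynomials that we want to avoid vanishing, and then using that the polynomial ring over $K$ is an integral domain.

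First I would introduce independent commutative indeterminates $y_1,\ldots,y_n$ and form, for each increasing index tuple $1\leq i_1<\cdots<i_s\leq n$, the polynomial $p(y_{i_1},\ldots,y_{i_s})\in K[y_1,\ldots,y_n]$. Because $p(x_1,\ldots,x_s)$ is nonzero and $i_1,\ldots,i_s$ are distinct, the substitution $x_j\mapsto y_{i_j}$ is an injection of variables, so each factor $p(y_{i_1},\ldots,y_{i_s})$ is a nonzero element of $K[y_1,\ldots,y_n]$.

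Next I would form the product
\[
Q(y_1,\ldots,y_n)=\prod_{1\leq i_1<\cdots<i_s\leq n}p(y_{i_1},\ldots,y_{i_s}).
\]
Since $K[y_1,\ldots,y_n]$ is an integral domain, the product of finitely many nonzero polynomials is nonzero, so $Q\neq 0$. Applying Lemma \ref{L2.2} to $Q$ over the infinite field $K$, there exist $a_1,\ldots,a_n\in K$ with $Q(a_1,\ldots,a_n)\neq 0$. But $Q(a_1,\ldots,a_n)$ is precisely the product of all the values $p(a_{i_1},\ldots,a_{i_s})$, so none of these values can vanish, which is exactly the desired conclusion.

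There is no real obstacle here: the only nontrivial point is recognizing that each factor $p(y_{i_1},\ldots,y_{i_s})$ remains a nonzero polynomial after the variable substitution (which uses distinctness of $i_1,\ldots,i_s$), and that the product of nonzero polynomials is nonzero in $K[y_1,\ldots,y_n]$. Everything else is immediate from Lemma \ref{L2.2}.
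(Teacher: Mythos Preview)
Your proof is correct and follows essentially the same approach as the paper: form the product of all the substituted polynomials, observe it is nonzero, and apply Lemma~\ref{L2.2}. In fact you give slightly more justification than the paper, which simply asserts ``it is clear that $f\neq 0$'' without spelling out the integral-domain and distinct-variables observations.
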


\begin{proof}
We set
\[
f(x_1,\ldots,x_n)=\prod_{1\leq i_1<\cdots<i_s\leq n}p(x_{i_1},\ldots,x_{i_s}).
\]
It is clear that $f\neq 0$. In view of Lemma \ref{L2.2} we have that there exist $a_1,\ldots,a_n\in K$ such that
\[
f(a_1,\ldots,a_n)\neq 0.
\]
This implies that
\[
p(a_{i_1},\ldots,a_{i_s})\neq 0
\]
for all $1\leq i_1<\cdots<i_s\leq n$. This proves the result.
\end{proof}

The following technical result is a generalization of \cite[Lemma 2.11]{FK}. We give its proof for completeness.

\begin{lemma}\label{L2.4}
Let $t\geq 1$. Let $U_i=\{i_1,\ldots,i_s\}$, $i=1,\ldots,t$. Let $p_i(x_{i_1},\ldots,x_{i_s})$ be a nonzero commutative polynomial over an infinite field $K$, where $i=1,\ldots,t$. Then there exist $a_{k}\in K$ with $k\in\bigcup_{i=1}^tU_i$ such that
\[
p_i(a_{i_1},\ldots,a_{i_s})\neq 0
\]
for all $i=1,\ldots,t$.
\end{lemma}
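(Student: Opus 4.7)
The plan is to mimic the proof of Lemma \ref{L2.3} almost verbatim. Set $V=\bigcup_{i=1}^t U_i$ and form the product polynomial
\[
f\bigl(\{x_k\}_{k\in V}\bigr)=\prod_{i=1}^t p_i(x_{i_1},\ldots,x_{i_s}),
\]
viewed as a commutative polynomial in the variables indexed by $V$. Since $K$ is a field, the polynomial ring $K[\{x_k\}_{k\in V}]$ is an integral domain, so the product of the nonzero polynomials $p_i$ is itself nonzero, i.e.\ $f\neq 0$.

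Next, invoke Lemma \ref{L2.2}: because $K$ is an infinite field and $f$ is a nonzero commutative polynomial, there exist values $a_k\in K$ for $k\in V$ such that $f(\{a_k\}_{k\in V})\neq 0$. Since $f$ is the product of the $p_i$, the nonvanishing of $f$ at this point forces each factor to be nonvanishing; that is,
\[
p_i(a_{i_1},\ldots,a_{i_s})\neq 0
\]
for every $i=1,\ldots,t$. This gives the claim.

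I do not expect any genuine obstacle: the only thing the statement adds beyond Lemma \ref{L2.3} is that the index sets $U_i$ are allowed to be arbitrary (rather than ranging over all $s$-subsets of $\{1,\ldots,n\}$), but the product-and-evaluate argument is insensitive to this change. The one small point to take care of is to make explicit that the values $a_k$ outside the support of a particular $p_i$ (i.e.\ for $k\in V\setminus U_i$) do not interfere with the nonvanishing of $p_i$, which follows immediately from the fact that $p_i$ does not depend on those variables.
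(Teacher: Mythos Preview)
Your proposal is correct and follows essentially the same argument as the paper: form the product $f=\prod_{i=1}^t p_i$ over the union of variable sets, observe $f\neq 0$, and apply Lemma~\ref{L2.2} to obtain a common nonvanishing point. The only cosmetic difference is that the paper relabels $\bigcup_i U_i$ as $\{1,\ldots,n\}$ while you keep the abstract index set $V$; your added remarks about the integral domain and the irrelevant variables are fine but not strictly needed.
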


\begin{proof}
Without loss of generality we assume that
\[
\{1,2,\ldots,n\}=\bigcup_{i=1}^tU_i.
\]
We set
\[
f(x_1,\ldots,x_n)=\prod_{i=1}^{t}p_i(x_{i_1},\ldots,x_{i_s}).
\]
It is clear that $f\neq 0$. In view of Lemma \ref{L2.2} we have that there exist $a_1,\ldots,a_n\in K$ such that
\[
f(a_{1},\ldots,a_n)\neq 0.
\]
This implies that
\[
p_i(a_{i_1},\ldots,a_{i_s})\neq 0
\]
for all $i=1,\ldots,t$. This proves the result.
\end{proof}

\section{The proof of Theorem \ref{T1}}

We begin with the following technical result, which is curcial for the proof of the main result of the paper.

\begin{lemma}\label{L3.1}
Let $s\geq 1$ and $t\geq 2$ be integers. Let $K$ be an infinite field.
Let $a_{i1},\ldots,a_{is},b\in K$, $i=1,\ldots,t$, where $b,a_{11}\in K^*$. For any $i\in\{2,\ldots,t\}$, there exists a nonzero element in $\{a_{i1},\ldots,a_{is}\}$. Then there exist $c_i\in K$, $i=1,\ldots,s$, such that
\[
\left\{
\begin{aligned}
a_{11}c_1+\cdots +a_{1s}c_s&=b;\\
a_{i1}c_1+\cdots +a_{is}c_s&\neq 0
\end{aligned}
\right.
\]
for all $i=2,\ldots,t$.
\end{lemma}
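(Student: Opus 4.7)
\textbf{Proof plan for Lemma \ref{L3.1}.} The plan is to reduce the problem to a polynomial non-vanishing question on an $(s-1)$-dimensional affine space and apply Lemma \ref{L2.2} (or Lemma \ref{L2.4}) to handle all $t-1$ non-vanishing conditions simultaneously.

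Since $a_{11}\in K^*$, the first (equality) constraint can be solved for $c_1$: I would set
\[
c_1=\frac{1}{a_{11}}\bigl(b-a_{12}c_2-\cdots-a_{1s}c_s\bigr),
\]
so the first equation is automatically satisfied for every choice of $c_2,\ldots,c_s\in K$ (when $s=1$, this simply forces $c_1=b/a_{11}$). Substituting this expression into the $i$-th inequality for $i\in\{2,\ldots,t\}$, I get a polynomial
\[
P_i(c_2,\ldots,c_s)=\frac{a_{i1}b}{a_{11}}+\sum_{j=2}^{s}\Bigl(a_{ij}-\frac{a_{i1}a_{1j}}{a_{11}}\Bigr)c_j,
\]
and the task becomes: find $c_2,\ldots,c_s\in K$ such that $P_i(c_2,\ldots,c_s)\neq 0$ for every $i=2,\ldots,t$.

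Next I would verify that each $P_i$ is a \emph{nonzero} polynomial by a short case analysis. If $a_{i1}\neq 0$, then the constant term of $P_i$ is $a_{i1}b/a_{11}\neq 0$ (using $b\neq 0$), so $P_i\neq 0$; this also covers the degenerate case $s=1$, because under the hypothesis that some $a_{ij}$ is nonzero, $a_{i1}\neq 0$ is forced when $s=1$. If instead $a_{i1}=0$, the hypothesis guarantees some $a_{ij}\neq 0$ for $j\geq 2$, so the linear part $\sum_{j=2}^{s}a_{ij}c_j$ of $P_i$ is nonzero, hence $P_i\neq 0$.

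Finally, with each $P_i$ a nonzero polynomial in $c_2,\ldots,c_s$ over the infinite field $K$, I would apply Lemma \ref{L2.4} (taking $U_i=\{2,\ldots,s\}$ for every $i$) to obtain common values $c_2,\ldots,c_s\in K$ with $P_i(c_2,\ldots,c_s)\neq 0$ for all $i=2,\ldots,t$; defining $c_1$ by the formula above then yields the desired tuple. I do not expect a serious obstacle: the only delicate point is making sure the case $a_{i1}=0$ and the boundary case $s=1$ are both captured by the hypothesis that the row $(a_{i1},\ldots,a_{is})$ is nonzero, which is exactly what is used to force $P_i\neq 0$.
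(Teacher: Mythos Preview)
Your proposal is correct and follows essentially the same approach as the paper: solve the equality constraint for $c_1$, substitute into the remaining linear forms, verify the resulting polynomials in $c_2,\ldots,c_s$ are nonzero via the case split $a_{i1}\neq 0$ versus $a_{i1}=0$, and invoke Lemma~\ref{L2.4}. The paper organizes this into three global cases (all $a_{i1}\neq 0$, all $a_{i1}=0$, mixed) and additionally carries along an auxiliary polynomial $f_1=b-a_{12}x_2-\cdots-a_{1s}x_s$ to force $c_1\neq 0$, but that extra condition is not needed for the statement; your per-$i$ case analysis is a cleaner packaging of the same argument.
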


\begin{proof}
The case of $s=1$ is obvious. We now assume that $s\geq 2$. Suppose first that $a_{i1}\neq 0$ for all $i=2,\ldots,t$. We define the following polynomials.
\[
\left\{
\begin{aligned}
f_1(x_2,\ldots,x_s)&=b-a_{12}x_2-\cdots -a_{1s}x_s;\\
f_i(x_2,\ldots,x_s)&=a_{i1}a_{11}^{-1}b+(a_{i2}-a_{i1}a_{11}^{-1}a_{12})x_2+\cdots +(a_{is}-a_{i1}a_{11}^{-1}a_{1s})x_s
\end{aligned}
\right.
\]
for all $i=2,\ldots,t$. Since $b,a_{i1}\in K^*$, $i=1,\ldots,t$, we note that $f_i\neq 0$ for all $i=1,\ldots,t$. In view of Lemma \ref{L2.4} we get that there exist $c_2,\ldots,c_s\in K$ such that
\[
f_i(c_2,\ldots,c_s)\neq 0
\]
for all $i=1,\ldots,t$. This implies that
\begin{equation}\label{Chen1}
\left\{
\begin{aligned}
b-a_{12}c_2-\cdots -a_{1s}c_s&\neq 0;\\
a_{i1}a_{11}^{-1}b+(a_{i2}-a_{i1}a_{11}^{-1}a_{12})c_2+\cdots +(a_{is}-a_{i1}a_{11}^{-1}a_{1s})c_s&\neq 0
\end{aligned}
\right.
\end{equation}
for all $i=2,\ldots,t$. We set
\[
c_1=a_{11}^{-1}(b-a_{12}c_2-\cdots -a_{1s}c_s).
\]
It follows from (\ref{Chen1}) that
\[
\left\{
\begin{aligned}
a_{11}c_1+\cdots +a_{1s}c_s&=b;\\
a_{i1}c_1+\cdots +a_{is}c_s&\neq 0
\end{aligned}
\right.
\]
for all $i=2,\ldots,t$, as desired.

Suppose next that $a_{i1}=0$, $i=2,\ldots,t$. Note that $a_{il(i)}\neq 0$, for some $2\leq l(i)\leq s$ for all $i=2,\ldots,t$. We define the following polynomials:
\[
\left\{
\begin{aligned}
f_1(x_2,\ldots,x_s)&=a_{12}x_2+\cdots +a_{1s}x_s-b;\\
f_i(x_2,\ldots,x_s)&=a_{i2}x_2+\cdots +a_{is}x_s
\end{aligned}
\right.
\]
for all $i=2,\ldots,t$. Note that $f_i\neq 0$ for all $i=1,\ldots,t$. In view of Lemma \ref{L2.4} we get that there exist $c_i\in K$, $i=2,\ldots,s$, such that
\[
f_i(c_2,\ldots,c_s)\neq 0
\]
for all $i=1,\ldots,t$. That is
\[
\left\{
\begin{aligned}
&a_{12}c_2+\cdots +a_{1s}c_s-b\neq 0;\\
&a_{i2}c_2+\cdots +a_{is}c_s\neq 0
\end{aligned}
\right.
\]
for all $i=1,\ldots,t$. Since $a_{11}\neq 0$ we get that there exists $c_{1}\in K$ such that
\[
a_{11}c_1=b-a_{12}c_2-\cdots -a_{1s}c_s.
\]
This implies that
\[
\left\{
\begin{aligned}
a_{11}c_1+&a_{12}c_2+\cdots +a_{1s}c_s=b;\\
&a_{i2}c_2+\cdots +a_{is}c_s\neq 0
\end{aligned}
\right.
\]
as desired.

We finally assume that there exist $a_{i1}\neq 0$ and $a_{j1}=0$ for some $i,j\in \{2,\ldots,t\}$. Without loss of generality we assume that
$a_{i1}\neq 0$ for all $i=2,\ldots,t_1$ and $a_{i1}=0$ for all $i=t_1+1,\ldots,t$. We define the following polynomials:
\[
\left\{
\begin{aligned}
f_1(x_2,\ldots,x_s)&=b-a_{12}x_2-\cdots -a_{1s}x_s;\\
f_{i}(x_2,\ldots,x_s)&=a_{i1}a_{11}^{-1}b+(a_{i2}-a_{i1}a_{11}^{-1}a_{12})x_2+\cdots +(a_{is}-a_{i1}a_{11}^{-1}a_{1s})x_s;\\
g_j(x_2,\ldots,x_s)&=a_{j2}x_2+\cdots +a_{js}x_s
\end{aligned}
\right.
\]
where $i=2,\ldots, t_1$ and $j=t_1+1,\ldots,t$. Note that $b,a_{i1}\in K^*$, $i=1,\ldots,t_1$, $a_{jl(j)}\neq 0$ where $2\leq l(j)\leq s$ for all $j=t_1+1,\ldots t$. We get that $f_i,g_j\neq 0$ for all $i=1,\ldots,t_1$ and $j=t_1+1,\ldots,t$. In view of Lemma \ref{L2.4} we get that there exist $c_i\in K$, $i=2,\ldots,s$, such that
\[
\left\{
\begin{aligned}
f_1(c_2,\ldots,c_s)&\neq 0;\\
f_{i}(c_2,\ldots,c_s)&\neq 0;\\
g_j(c_2,\ldots,c_s)&\neq 0,
\end{aligned}
\right.
\]
where $i=1,\ldots, t_1$ and $j=t_1+1,\ldots,t$. This implies that
\begin{equation}\label{Chen2}
\left\{
\begin{aligned}
b-a_{12}c_2-\cdots -a_{1s}c_s&\neq 0;\\
a_{i1}a_{11}^{-1}b+(a_{i2}-a_{i1}a_{11}^{-1}a_{12})c_2+\cdots +(a_{is}-a_{i1}a_{11}^{-1}a_{1s})c_s&\neq 0;\\
a_{j2}c_2+\cdots +a_{js}c_s&\neq 0,
\end{aligned}
\right.
\end{equation}
where $i=2,\ldots, t_1$ and $j=t_1+1,\ldots,t$. We set
\[
c_1=a_{11}^{-1}(b-a_{12}c_2-\cdots -a_{1s}c_s).
\]
It follows from (\ref{Chen2}) that
\[
\left\{
\begin{aligned}
a_{11}c_1+\cdots +a_{1s}c_s&=b;\\
a_{i1}c_1+\cdots +a_{is}c_s&\neq 0;\\
a_{j1}c_2+\cdots +a_{js}c_s&\neq 0,
\end{aligned}
\right.
\]
where $i=2,\ldots,t_1$ and $j=t_1+1,\ldots,t$, as desired. The proof of the result is now complete.
\end{proof}

The following result is crucial for the proof of the main result of the paper.

\begin{lemma}\label{L3.2}
Let $p(x_1,\ldots,x_m)$ is a noncommutative polynomial with zero constant term over an infinite field $K$. Suppose ord$(p)=r$, where $1<r<n-1$. For any $A'=(a_{s,r+s+t}')\in T_n(K)^{(r-1)}$, where $a_{s,r+s}'\neq 0$ for all $1\leq s<r+s\leq n$. Then $A'\in p(T_n(K))$.
\end{lemma}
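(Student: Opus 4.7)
The plan is to construct $u_1,\ldots,u_m\in T_n(K)$ with $u_i=(a_{jk}^{(i)})$ and $p(u_1,\ldots,u_m)=A'$ by fixing the entries of the $u_i$'s layer by layer, starting from the diagonal and moving outward across the superdiagonals. Throughout, Lemma \ref{M1} is the workhorse: it turns the verification into bookkeeping about entries of matrix products built from chains.

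First I would fix the diagonals. By Lemma \ref{L2.1}(iii) there is a tuple $(i_1',\ldots,i_r')$ such that the commutative polynomial $p_{i_1',\ldots,i_r'}$ is nonzero. Using the obvious vector-valued adaptation of Lemmas \ref{L2.3} and \ref{L2.4}, I would choose $\bar{a}_{jj}\in K^m$ for $j=1,\ldots,n$ so that $p_{i_1',\ldots,i_r'}(\bar{a}_{j_1,j_1},\ldots,\bar{a}_{j_{r+1},j_{r+1}})\neq 0$ for every $1\leq j_1<\cdots<j_{r+1}\leq n$, while simultaneously arranging nonvanishing of the finitely many further commutative-polynomial evaluations that will be required below. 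By Lemma \ref{L2.1}(i) the diagonal of $p(u_1,\ldots,u_m)$ is automatically zero, matching $A'$ there, and by Lemma \ref{L2.1}(ii) everything strictly inside the $(r-1)$-th superdiagonal is zero as well.

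Next I would fix the first superdiagonal so that the $r$-th superdiagonal of $p(u_1,\ldots,u_m)$ matches that of $A'$. By Lemma \ref{M1}, $p_{s,s+r}$ is a multilinear function of the first-superdiagonal entries alone, since only length-$r$ (hence consecutive) chains can contribute. I would process the equations $p_{s,s+r}=a_{s,s+r}'$ in decreasing order of $s$; at step $s$ the active unknowns are the $m$ entries $\{a_{s,s+1}^{(i)}:i=1,\ldots,m\}$, with everything to the right already fixed, so the equation at $s$ becomes linear in these unknowns. The hypothesis $a_{s,s+r}'\neq 0$ plays the role of $b$ in Lemma \ref{L3.1}, the nonvanishing arranged in Step~1 together with an inductive nonvanishing invariant on the downstream first-superdiagonal products supplies a nonzero leading coefficient $a_{11}$, and Lemma \ref{L3.1} then delivers an assignment solving the equation while preserving the other nonzero linear expressions that will be needed at subsequent steps.

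Finally I would fix the remaining superdiagonals inductively. For $\ell=1,2,\ldots,n-r-1$, at stage $\ell$ all $a_{j,j+k}^{(i)}$ with $k\leq\ell$ are fixed and all $a_{j,j+k}^{(i)}$ with $k>\ell+1$ are still zero; I would pick $a_{j,j+\ell+1}^{(i)}$ so that $p_{s,s+r+\ell}=a_{s,s+r+\ell}'$ for all valid $s$. A short chain-count from Lemma \ref{M1} shows two things: any chain containing a width-$(\ell+1)$ step has length at least $r$ and hence total width at least $r+\ell$, so the new variables do not disturb any entry already matched at an earlier stage; and for $t-s=r+\ell$ exactly, only length-$r$ chains containing a unique width-$(\ell+1)$ step can contribute, which makes $p_{s,s+r+\ell}$ genuinely \emph{linear} in the new variables. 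Lemma \ref{L3.1} applies once more, with the required nonvanishing of coefficients inherited from Steps~1 and~2.

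The main obstacle will be the first-superdiagonal step, where the system is genuinely multilinear and one must track carefully which products of already-chosen entries remain nonzero so that a designated leading coefficient is nonzero each time Lemma \ref{L3.1} is invoked; this is exactly what Lemma \ref{L3.1} is designed for. Once this step is in place, the higher-superdiagonal induction runs cleanly because each new system is linear and the needed nonvanishing has already been arranged.
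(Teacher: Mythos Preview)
Your plan is essentially the paper's strategy: fix the diagonals so that $p_{i_1'\cdots i_r'}$ is nonvanishing on every relevant $(r{+}1)$-tuple, then solve for the off-diagonal entries in a carefully chosen order, invoking Lemma~\ref{L3.1} at each step to hit the target while keeping nonzero the ``leading coefficients'' required later. The paper realizes this with a slightly different parametrization (only rows $1,\ldots,r-1$ carry general first-superdiagonal entries; the remaining control variables sit at positions $(r{+}s{-}1,\,r{+}s{+}t)$ and are processed in the order $(1,r{+}1)<\cdots<(n{-}r,n)<(1,r{+}2)<\cdots$), and it makes your ``inductive nonvanishing invariant'' completely explicit via the recursive polynomials $f_{s,r+s-i}$ of (\ref{e6})--(\ref{e7}). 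That cascade of partial leading coefficients is the heart of the argument, and it is exactly what you gesture at in Step~2 but do not write down; the paper's careful tracking of which $f_{s',r+s'-v}$ lie in $\overline{W}_{s,r+s+t}$ is what makes the induction close.

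Two small corrections to Step~3. First, for $t-s=r+\ell$ there are also chains of length $k>r$ using only steps of width $\le\ell$ (e.g.\ all width~$1$ when $k=r+\ell$), so $p_{s,s+r+\ell}$ is \emph{affine} in the new width-$(\ell{+}1)$ variables, not homogeneous linear; this is harmless, since an affine equation with nonzero leading coefficient is still solvable. Second, you should not invoke Lemma~\ref{L3.1} here: its hypothesis $b\ne0$ can fail for $\ell\ge1$, and in any case you do not need it --- the leading coefficient you want (the coefficient of, say, $a_{s+r-1,s+r+\ell}^{(i_r')}$) depends only on first-superdiagonal entries already fixed in Step~2, so once Step~2 has arranged those nonzero you simply solve each higher-layer equation outright. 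This is precisely what the paper does in the $t>0$ branch of its induction, where $f_{s,r+s-1}\in K^*$ is already known and one just reads off $x_{r+s-1,r+s+t}^{(i_r')}$.
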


\begin{proof}
For any $u_i=(a_{jk}^{(i)})\in T_n(K)$, $i=1,\ldots,m$, in view of both Lemma \ref{M1} and Lemma \ref{L2.1} that

\begin{equation}\label{e3}
p(u_1,\ldots,u_m)=(p_{s,r+s+t})
\end{equation}
where
\[
p_{s,r+s+t}=\sum\limits_{k=r}^{r+t}\left(\sum\limits_{\substack{s=j_1<\cdots <j_{k+1}=r+s+t\\(i_1,\ldots,i_k)\in T_k}}
p_{i_1\cdots i_k}(\bar{a}_{j_1j_1},\ldots,\bar{a}_{j_{k+1}j_{k+1}})a_{j_1j_2}^{(i_1)}\cdots a_{j_{k}j_{k+1}}^{(i_k)}\right)
\]
for all $1\leq s<r+s+t\leq n$ and
\[
p_{i_1'\cdots i_r'}(K)\neq \{0\}
\]
for some $1\leq i_1',\ldots,i_r'\leq m$. It follows from Lemma \ref{L2.3} that
there exist $\bar{c}_1,\ldots,\bar{c}_n\in K^m$ such that
\begin{equation}\label{e4}
p_{i_1'\cdots i_r'}(\bar{c}_{j_1},\ldots,\bar{c}_{j_{r+1}})\neq 0
\end{equation}
for all $1\leq j_1<\ldots <j_{r+1}\leq n$.

We set
\[
\left\{
\begin{aligned}
\bar{a}_{jj}&=\bar{c}_j,\quad\mbox{$j=1,\ldots,n$};\\
a_{i,i+1}^{(k)}&=a_{i,i+1}^{(k)},\quad\mbox{$i=1,\ldots,r-1$ and $k=1,\ldots,m$};\\
a_{r+s-1,r+s+t}^{(i_k')}&=x_{r+s-1,r+s+t}^{(i_k')},\quad\mbox{$1\leq s<r+s+t\leq n$, $k=1,\ldots,r$};\\
a_{ij}^{(k)}&=0,\quad\mbox{otherwise}.
\end{aligned}
\right.
\]

We define an order of the set of the variables
\[
\left\{x_{r+s-1,r+s+t}^{(i_k')}~|~\quad\mbox{$1\leq s<r+s+t\leq n$, $k=1,\ldots,r$}\right\}
\]
as follows:
\begin{enumerate}
\item[(i)] $x_{r+s-1,r+s+t}^{(i_j')}$ and $x_{r+s-1,r+s+t}^{(i_k')}$ have the same order for all $j,k=1,\ldots,r$;
\item[(ii)] $x_{r+s-1,r+s+t}^{(i_k')}<x_{r+s'-1,r+s'+t'}^{(i_k')}$ for all $t=t'$ and $s<s'$;
\item[(iii)] $x_{r+s-1,r+s+t}^{(i_k')}<x_{r+s'-1,r+s'+t'}^{(i_k')}$ for all $t<t'$.
\end{enumerate}

We set
\[
\hat{c}_{s,t}=(\bar{c}_s,\ldots,\bar{c}_{r+s-1},\bar{c}_{r+s+t}).
\]
It follows from (\ref{e4}) that
\begin{equation}\label{e5}
p_{i_1'\cdots i_r'}(\hat{c}_{s,t})\neq 0.
\end{equation}

For any $1\leq s<r+s\leq n$ and $s\leq r-1$, we set
\[
f_{s,r}=\sum\limits_{(i_1,\ldots,i_{r-s})\in T_{r-s}}p_{i_1\cdots i_{r-s}i_{r-s+1}'\cdots i_r'}(\hat{c}_{s,t})a_{s,s+1}^{(i_1)}\cdots a_{r-1,r}^{(i_{r-s})}.
\]
For any $1\leq s<r+s\leq n$ and $s\geq r$, we set
\[
f_{s,r}=p_{i_1'\cdots i_{r}'}(\hat{c}_{s,t}).
\]

We claim that $f_{s,r}\neq 0$ for all $1\leq s<r+s\leq n$. In view of (\ref{e5}) it suffices to prove that $f_{s,r}\neq 0$, where $1\leq s<r+s\leq n$ and $s\leq r-1$. We set
\[
V_{s,r}=\{(i,i+1,k)~|~i=s,\ldots,r-1,\quad k=1,\ldots,m\}.
\]
It is clear that $f_{s,r}$ is a commutative polynomial over $K$ on the set of variables
\[
\left\{a_{i,i+1}^{(k)}~|~(i,i+1,k)\in V_{s,r}\right\}.
\]

We take $a_{i,i+1}^{(k)}\in K$, $(i,i+1,k)\in V_{s,r}$ such that
\[
\left\{
\begin{aligned}
a_{s+i,s+i+1}^{(i_{i+1}')}&=1\quad\mbox{$i=0,\ldots,r-s-1$};\\
a_{i,i+1}^{(k)}&=0\quad\mbox{otherwise}.
\end{aligned}
\right.
\]
It follows from (\ref{e5}) that
\[
f_{s,r}(a_{i,i+1}^{(k)})=p_{i_1'\cdots i_r'}(\hat{c}_{s,t})\neq 0,
\]
as desired. In view of Lemma \ref{L2.4} we get that there exist $a_{i,i+1}^{(k)}\in K$, $(i,i+1,k)\in \bigcup_{s=1}^{min\{n-r,r-1\}}V_{s,r}$ such that
\[
f_{s,r}(a_{i,i+1}^{(k)})\neq 0
\]
for all $1\leq s<r+s\leq n$ and $s\leq r-1$.

For any $2\leq s\leq r+s\leq n$, we define
\begin{equation}\label{e6}
f_{s,r+s-i}=\sum\limits_{(i_1,\ldots,i_{r-i})\in T_{r-i}}
p_{i_1\cdots i_{r-i}i_{r-i+1}'\cdots i_r'}(\hat{c}_{s,t})a_{s,s+1}^{(i_1)}\cdots a_{r+s-i-1,r+s-i}^{(i_{r-i})}
\end{equation}
for all $1\leq i\leq min\{s-1,r-1\}$. We now claim that (\ref{e6}) is a recursive polynomial over $K$. Indeed, we get from (\ref{e6}) that
\begin{eqnarray}\label{oo1}
\begin{split}
f&_{s,r+s-i}=\left(\sum\limits_{(i_1,\ldots,i_{r-i-1})\in T_{r-i-1}}
p_{i_1\cdots i_{r-i-1}i_{r-i}'\cdots i_r'}(\hat{c}_{s,t})a_{s,s+1}^{(i_1)}\cdots a_{r+s-i-2,r+s-i-1}^{(i_{r-i-1})}\right)x_{r+s-i-1,r+s-i}^{(i_{r-i}')}\\
&=\sum\limits_{\substack{1\leq k\leq r\\i_k'\neq i_{r-i}'}}\left(\sum\limits_{(i_1,\ldots,i_{r-i-1})\in T_{r-i-1}}
p_{i_1\cdots i_{r-i-1}i_k'i_{r-i+1}'\cdots i_r'}(\hat{c}_{s,t})a_{s,s+1}^{(i_1)}\cdots a_{r+s-i-2,r+s-i-1}^{(i_{r-i-1})}\right)x_{r+s-i-1,r+s-i}^{(i_{k}')}
\end{split}
\end{eqnarray}
for all $1\leq i\leq min\{s-1,r-1\}$. We set
\[
\alpha_{s,r+s-i-1,k}=\sum\limits_{(i_1,\ldots,i_{r-i-1})\in T_{r-i-1}}
p_{i_1\cdots i_{r-i-1}i_k'i_{r-i+1}'\cdots i_r'}(\hat{c}_{s,t})a_{s,s+1}^{(i_1)}\cdots a_{r+s-i-2,r+s-i-1}^{(i_{r-i-1})}
\]
for all $1\leq i\leq min\{s-1,r-1\}$ and $k=1,\ldots,r$. It follows from both (\ref{e6}) and (\ref{oo1}) that

\begin{equation}\label{e7}
f_{s,r+s-i}=f_{s,r+s-i-1}x_{r+s-i-1,r+s-i}^{(i_{r-i}')}+\sum\limits_{\substack{1\leq k\leq r\\i_k'\neq i_{r-i}'}}\alpha_{s,r+s-i-1,k}x_{r+s-i-1,r+s-i}^{(i_{k}')}
\end{equation}
for all $1\leq i\leq min\{s-1,r-1\}$, where $f_{s,r+s-i-1}$ and $\alpha_{s,r+s-i-1,k}$ are commutative polynomials over $K$ on some variables in
the front of $x_{r+s-i-1,r+s-i}^{(i_{r-i}')}$.

For any $1\leq s<r+s+t\leq n$, we set
\[
U_{s,r+s+t}=\left\{(r+u-1,r+u+w,i_k')~|~x_{r+u-1,r+u+w}^{(i_k')}\quad\mbox{in $p_{s,r+s+t}$}\right\}
\]
and
\[
\overline{U}_{s,r+s+t}=\left\{(r+u-1,r+u,i_k')\in U_{s,r+s+t}\right\}.
\]

It follows from (\ref{e3}) that

\begin{eqnarray}\label{e8}
\begin{split}
p&_{s,r+s+t}=\left(\sum\limits_{(i_1,\ldots,i_{r-1})\in T_{r-1}}p_{i_1\cdots i_{r-1}i_r'}(\hat{c}_{s,t})a_{s,s+1}^{(i_1)}\cdots a_{r+s-2,r+s-1}^{(i_{r-1})}\right)x_{r+s-1,r+s+t}^{(i_r')}\\
&+\sum\limits_{\substack{1\leq k\leq r\\i_k'\neq i_r'}}\left(\sum\limits_{(i_1,\ldots,i_{r-1})\in T_{r-1}}p_{i_1\cdots i_{r-1}i_k'}(\hat{c}_{s,t})a_{s,s+1}^{(i_1)}\cdots a_{r+s-2,r+s-1}^{(i_{r-1})}\right)x_{r+s-1,r+s+t}^{(i_k')}\\
&+\sum\limits_{k=r}^{r+t}\left(\sum\limits_{\substack{s=j_1<\cdots <j_{k+1}=r+s+t\\(j_k,j_{k+1})\neq (r+s-1,r+s+t)\\(i_1,\ldots,i_k)\in T_k}}
p_{i_1\cdots i_k}(\bar{c}_{j_1},\ldots,\bar{c}_{j_{k+1}})a_{j_1j_2}^{(i_1)}\cdots a_{j_{k}j_{k+1}}^{(i_k)}\right)
\end{split}
\end{eqnarray}

We set

\[
\beta_{s,r+s-1,k}=\sum\limits_{(i_1,\ldots,i_{r-1})\in T_{r-1}}p_{i_1\cdots i_{r-1}i_k'}(\hat{c}_{s,t})a_{s,s+1}^{(i_1)}\cdots a_{r+s-2,r+s-1}^{(i_{r-1})}
\]
for $k=1,\ldots,r$ with $i_k'\neq i_r'$, and
\[
\beta_{s,r+s+t}=\sum\limits_{k=r}^{r+t}\left(\sum\limits_{\substack{s=j_1<\cdots <j_{k+1}=r+s+t\\(j_k,j_{k+1})\neq (r+s-1,r+s+t)\\(i_1,\ldots,i_k)\in T_k}}
p_{i_1\cdots i_k}(\bar{c}_{j_1},\ldots,\bar{c}_{j_{k+1}})a_{j_1j_2}^{(i_1)}\cdots a_{j_{k}j_{k+1}}^{(i_k)}\right).
\]
It follows from both (\ref{e7}) and (\ref{e8}) that

\begin{equation}\label{e9}
p_{s,r+s+t}=f_{s,r+s-1}x_{r+s-1,r+s+t}^{(i_r')}+\sum\limits_{\substack{1\leq k\leq r\\i_k'\neq i_r'}}\beta_{s,r+s-1,k}x_{r+s-1,r+s+t}^{(i_k')}+\beta_{s,r+s+t},
\end{equation}
where $f_{1,r}\in K^*$ and $\beta_{1,r,k}\in K$, $k=1,\ldots,r$ with $i_k'\neq i_r'$, $f_{s,r+s-1}$, $\beta_{s,r+s+t,k}$ are commutative polynomials on some variables with indexes in $\overline{U}_{s,r+s+t}$, where $s\geq 2$ and $k=1,\ldots,r$ with $i_k'\neq i_r'$, and $\beta_{s,r+s+t}$ is a commutative polynomial over $K$ on some variables in the front of $x_{r+s-1,r+s+t}^{(i_r')}$. It is clear that
\[
\beta_{s,r+s+t}=0
\]
in the case of $t=0$.

We define an order of the set
\[
\{(s,r+s+t)~|~1\leq s<r+s+t\leq n\}
\]
as follows:
\begin{enumerate}
\item[(i)] $(s,r+s+t)<(s_1,r+s_1+t_1)$ for $t<t_1$;
\item[(ii)] $(s,r+s+t)<(s_1,r+s_1+t_1)$ for all $t=t_1$ and $s<s_1$.
\end{enumerate}
That is
\begin{equation}\label{e10}
(1,r+1)<\cdots <(n-r,n)<(1,r+2)<\cdots <(n-r-1,n)<\cdots <(1,n).
\end{equation}

We set
\[
W_{s,r+s+t}=\bigcup_{(1,r+1)\leq (i,r+i+j)\leq (s,r+s+t)}U_{i,r+i+j},
\]
where $1\leq s<r+s+t\leq n$ and
\[
\overline{W}_{s,r+s+t}=\bigcup_{(1,r+1)\leq (i,r+i+j)\leq (s,r+s+t)}\overline{U}_{i,r+i+j}.
\]

It is easy to check that
\begin{equation}\label{ww1}
\overline{W}_{s,r+s+t}=\overline{W}_{n-r,n}
\end{equation}
for all $t>0$. Indeed, since $t>0$ we see that $(s,r+s+t)>(n-r,n)$. This implies that $\overline{W}_{s,r+s+t}\supseteq\overline{W}_{n-r,n}$.

Take any $(r+u-1,r+u,i_k')\in \overline{W}_{s,r+s+t}$. It is clear that
\[
(r+u-1,r+u,i_k')\in \overline{U}_{u,r+u}.
\]
Note that
\[
\overline{U}_{u,r+u}\subseteq\overline{W}_{n-r,n}.
\]
We obtain that
\[
(r+u-1,r+u,i_k')\in \overline{W}_{n-r,n}.
\]
This implies that $\overline{W}_{s,r+s+t}\subseteq \overline{W}_{n-r,n}$. Hence,
$\overline{W}_{s,r+s+t}=\overline{W}_{n-r,n}$ as desired.

For any $A'=(a_{s,r+s+t}')\in T_n(K)^{(r-1)}$, where $a_{s,r+s}'\neq 0$ for all $1\leq s<r+s\leq n$, we claim that there exist $c_{r+u-1,r+u+w}^{(i_k')}\in K$ with
\[
(r+u-1,r+u+w,k)\in W_{s,r+s+t}
\]
such that
\[
p_{i,r+i+j}(c_{r+u-1,r+u+w}^{(i_k')})=a_{i,r+i+j}'
\]
for all $(1,r+1)\leq (i,r+i+j)\leq (s,r+s+t)$ and
\[
f_{s',r+s'-v}(c_{r+u-1,r+u}^{(i_k')})\neq 0
\]
for all $f_{s',r+s'-v}$ on some variables with indexes in $\overline{W}_{s,r+s+t}$, where $s'\geq 2$ and $1\leq v\leq min\{s'-1,r-1\}$.

We prove the claim by induction on $(s,r+s+t)$. Suppose first that $(s,r+s+t)=(1,r+1)$. We get from (\ref{e9}) that

\begin{equation}\label{e11}
p_{1,r+1}=f_{1,r}x_{r,r+1}^{(i_r')}+\sum\limits_{\substack{1\leq k\leq r\\i_k\neq i_r'}}\beta_{1,r,k}x_{r,r+1}^{(i_k')},
\end{equation}
where $f_{1,r}\in K^*$, $\beta_{1,r,k}\in K$, $k=1,\ldots,r$ with $i_k'\neq i_r'$.

For any $f_{s',r+s'-v}$ on $x_{r,r+1}^{(i_k')}$, $k=1,\ldots,r$, where $s'\geq 2$ and $1\leq v\leq min\{s'-1,r-1\}$, we get from (\ref{e7}) that  $v=s'-1$ and
\begin{equation}\label{e12}
f_{s',r+s'-v}=f_{s',r}x_{r,r+1}^{(i_{r-s'+1}')}+\sum\limits_{\substack{1\leq k\leq r\\i_k'\neq i_{r-s'+1}'}}\alpha_{s',r,k}x_{r,r+1}^{(i_{k}')},
\end{equation}
where $2\leq s'\leq r$. Note that $f_{s',r}\in K^*$ and $\alpha_{s',r,k}\in K$, $k=1,\ldots,r$ with $i_k'\neq i_{r-s'+1}‘$.

In view of Lemma \ref{L3.1} we get from both (\ref{e11}) and (\ref{e12}) that there exist $c_{r,r+1}^{(i_k')}\in K$, $k=1,\ldots,r$, such that
\[
p_{1,r+1}(c_{r,r+1}^{(i_k')})=a_{1,r+1}'
\]
and
\[
f_{s',r+s'-v}(c_{r,r+1}^{(i_k')})\neq 0,
\]
where $2\leq s'\leq r$ and $v=s'-1$, as desired.

Suppose next that $(s,r+s+t)\neq (1,r+1)$. We rewrite (\ref{e10}) as follows.
\[
(1,r+1)<\cdots<(s_1,r+s_1+t_1)<(s,r+s+t)<\cdots<(1,n).
\]

By induction on $(s_1,r+s_1+t_1)$ we have that there exist $c_{r+u-1,r+u+w}^{(i_k')}\in K$ with
\[
(r+u-1,r+u+w,k)\in W_{s_1,r+s_1+t_1}
\]
such that
\[
p_{i,r+i+j}(c_{r+u-1,r+u+w}^{(i_k')})=a_{i,r+i+j}'
\]
for all $(1,r+1)\leq (i,r+i+j)\leq (s_1,r+s_1+t_1)$ and
\[
f_{s',r+s'-v}(c_{r+u-1,r+u}^{(i_k')})\neq 0
\]
for all $f_{s',r+s'-v}$ on some variables with indexes in $\overline{W}_{s_1,r+s_1+t_1}$, where $s'\geq 2$ and $1\leq v\leq min\{s'-1,r-1\}$.

Suppose first that $t=0$. Note that $s\geq 2$, $s_1=s-1$, and $t_1=0$. It follows from (\ref{e9}) that

\begin{equation}\label{e13}
p_{s,r+s}=f_{s,r+s-1}x_{r+s-1,r+s}^{(i_r')}+\sum\limits_{\substack{1\leq k\leq r\\i_k'\neq i_r'}}\beta_{s,r+s-1,k}x_{r+s-1,r+s}^{(i_k')},
\end{equation}
where $f_{s,r+s-1}$ and $\beta_{s,r+s-1,k}$, $k=1,\ldots,r$ with $i_k'\neq i_r'$, are commutative polynomials over $K$ on some variables with indexes in $\overline{W}_{s-1,r+s-1}$. By induction hypothesis we have that
\[
f_{s,r+s-1}\in K^*\quad\mbox{and}\quad\beta_{s,r+s-1,k}\in K
\]
for all $k=1,\ldots,r$ with $i_k'\neq i_r'$.

Take any $f_{s',r+s'-v}$ on some variables with indexes in $\overline{W}_{s,r+s}$, where $s'\geq 2$ and $1\leq v\leq min\{s'-1,r-1\}$. If $f_{s',r+s'-v}$ is a commutative polynomial over $K$ on some variables with indexes in $\overline{W}_{s-1,r+s-1}$, by induction hypothesis we get that
\[
f_{s',r+s'-v}\in K^*.
\]

Otherwise, $f_{s',r+s'-v}$ is not a commutative polynomial over $K$ on some variables with indexes in $\overline{W}_{s-1,r+s-1}$. Note that
\[
\overline{W}_{s,r+s}\setminus \overline{W}_{s-1,r+s-1}=\left\{(r+s-1,r+s,i_k')~|~k=1,\ldots,r\right\}.
\]

It follows from (\ref{e7}) that $v=s'-s$ and
\begin{equation}\label{e14}
f_{s',r+s'-v}=f_{s',r+s-1}x_{r+s-1,r+s}^{(i_{r-s'+s}')}+\sum\limits_{\substack{1\leq k\leq r\\i_k'\neq i_{r-s'+s}'}}\alpha_{s',r+s-1,k}x_{r+s-1,r+s}^{(i_{k}')}
\end{equation}
for all $1\leq s'-s\leq r-1$, where $f_{s',r+s-1}$ and $\alpha_{s',r+s-1,k}$, $k=1,\ldots,r$ with $i_k'\neq i_{r-s'+s}'$, are commutative polynomial over $K$ on some variables
with indexes in $\overline{W}_{s-1,r+s-1}$. By induction hypothesis we note that
\[
f_{s',r+s-1}\in K^*\quad\mbox{and}\quad \alpha_{s',r+s-1,k}\in K
\]
for all $k=1,\ldots,r$ with $i_k'\neq i_r'$. In view of Lemma \ref{L3.1} we get from both (\ref{e13}) and (\ref{e14}) that there exist $c_{r+s-1,r+s}^{(i_k')}\in K$, $k=1,\ldots,r$, such that
\begin{eqnarray*}
\begin{split}
p_{s,r+s}(c_{r+s-1,r+s}^{(i_k')})=a_{s,r+s}'
\end{split}
\end{eqnarray*}
and
\[
f_{s',r+s'-v}(c_{r+s-1,r+s}^{(i_k')})\neq 0
\]
for all $1\leq s'-s\leq r-1$, as desired.

Suppose next that $t>0$. It follows from (\ref{e9}) that

\begin{equation}\label{e15}
p_{s,r+s+t}=f_{s,r+s-1}x_{r+s-1,r+s+t}^{(i_r')}+\sum\limits_{\substack{1\leq k\leq r\\i_k'\neq i_r'}}\beta_{s,r+s-1,k}x_{r+s-1,r+s+t}^{(i_k')}+\beta_{s,r+s+t},
\end{equation}
where $f_{s,r+s-1}$ and  $\beta_{s,r+s-1,k}$, $k=1,\ldots,r$, are commutative polynomials over $K$ on some variables with indexes on $\overline{W}_{s,r+s+t}$, $\beta_{s,r+s+t}$ is commutative polynomials over $K$ on some variables in the front of $x_{r+s-1,r+s+t}^{(i_r')}$.

We note that
\[
\overline{W}_{s,r+s+t}=\overline{W}_{s_1,r+s_1+t_1}.
\]
Indeed, it follows from (\ref{ww1}) that
\[
\overline{W}_{s,r+s+t}=\overline{W}_{n-r,n}.
\]
Suppose first that $s=1$ and $t=1$ we get that
\[
\overline{W}_{s_1,r+s_1+t_1}=\overline{W}_{n-r,n}.
\]
It implies that
\[
\overline{W}_{s,r+s+t}=\overline{W}_{s_1,r+s_1+t_1},
\]
as desired. Suppose next that either $s>1$ or $t>1$. We note that $t_1\geq 1$. It follows from (\ref{ww1}) that
\[
\overline{W}_{s_1,r+s_1+t_1}=\overline{W}_{n-r,n}.
\]
This implies that
\[
\overline{W}_{s,r+s+t}=\overline{W}_{s_1,r+s_1+t_1},
\]
as desired. It follows that $f_{s,r+s-1}$ and  $\beta_{s,r+s-1,k}$, $k=1,\ldots,r$ with $i_k'\neq i_r'$, are commutative polynomials over $K$ on some variables with indexes in $\overline{W}_{s_1,r+s_1+t_1}$. By induction hypothesis we note that
\[
f_{s,r+s-1}\in K^*\quad\mbox{and}\quad \beta_{s,r+s-1,k}\in K
\]
for all $k=1,\ldots,r$ with $i_k'\neq i_r'$. It is easy to check that
\[
W_{s,r+s+t}\setminus W_{s_1,r+s_1+t_1}=\left\{(r+s-1,r+s+t,i_k')~|~k=1,\ldots, r\right\}.
\]

This implies that $\beta_{s,r+s+t}$ is a commutative polynomial over $K$ on some variables with indexes in $W_{s_1,r+s_1+t_1}$. By induction hypothesis we get that $\beta_{s,r+s+t}\in K$.

Since $f_{s,r+s-1}\in K^*$ we get from (\ref{e15}) that there exist $c_{r+s-1,r+s+t}^{(i_k')}\in K$, $k=1,\ldots,r$, such that
\[
p_{s,r+s+t}(c_{r+s-1,r+s+t}^{(i_k')})=a_{s,r+s+t}'
\]

Take any $f_{s',r+s'-v}$ on some variables with indexes in $\overline{W}_{s,r+s+t}$, where $s'\geq 2$ and $1\leq v\leq min\{s'-1,r-1\}$. Since $\overline{W}_{s,r+s+t}=\overline{W}_{s_1,r+s_1+t_1}$ we get that $f_{s',r+s'-v}$ is a commutative polynomial over $K$ on some variables with indexes in $\overline{W}_{s_1,r+s_1+t_1}$. By induction hypothesis we get that
\[
f_{s',r+s'-v}\in K^*
\]
where $s'\geq 2$ and $1\leq v\leq min\{s'-1,r-1\}$, as desired. This proves the claim.

Let $(s,r+s+t)=(1,n)$. We obtain that there exist $c_{r+u-1,r+u+w}^{(i_k')}\in K$, $k=1,\ldots,r$, with
\[
(r+u-1,r+u+w,k)\in W_{1,n},
\]
such that
\begin{equation}\label{Chen10}
p_{i,r+i+j}(c_{r+u-1,r+u+w}^{(i_k')})=a_{i,r+i+j}'
\end{equation}
for all $(1,r+1)\leq (i,r+i+j)\leq (1,n)$ and
\[
f_{s',r+s'-v}(c_{r+u-1,r+u}^{(i_k')})\neq 0
\]
for all $f_{s',r+s'-v}$ on some variables with indexes in $\overline{W}_{1,n}$, where $s'\geq 2$ and $1\leq v\leq min\{s'-1,r-1\}$.

It follows from both (\ref{e3}) and (\ref{Chen10}) that
\[
p(u_1,\ldots,u_m)=(p_{s,r+s+t})=(a_{s,r+s+t}')=A'.
\]
This implies that $A'\in p(T_n(K))$. The proof of the result is complete.
\end{proof}

The following result is an interesting result, which will be used in the proof of the main result of the paper.

\begin{lemma}\label{L10}
Let $n\geq 4$ and $m\geq 1$ be integers. Let $p(x_1,\ldots,x_m)$ be a
polynomial with zero constant term in non-commutative variables over an infinite field $K$. Suppose that ord$(p)=n-2$. We have that $p(T_n(K))=T_n(K)^{(n-3)}$.
\end{lemma}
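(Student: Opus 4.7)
The containment $p(T_n(K)) \subseteq T_n(K)^{(n-3)}$ is immediate from Lemma \ref{L2.1}(ii). For the reverse inclusion, observe that $T_n(K)^{(n-3)}$ is three-dimensional: every element has the form $A' = aE_{1,n-1} + bE_{1,n} + cE_{2,n}$. With $r = n-2$, the indices $s$ satisfying $1 \leq s < r+s \leq n$ are precisely $s = 1, 2$, so Lemma \ref{L3.2} already yields $A' \in p(T_n(K))$ whenever $a \neq 0$ and $c \neq 0$. My plan is to dispose of the remaining degenerate cases ($a = 0$, or $c = 0$) by a direct construction that exploits a sharp collapse in path-counting special to $r = n-2$.

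Using Lemma \ref{L2.1}(iii), first fix $i_1', \ldots, i_{n-2}' \in \{1, \ldots, m\}$ so that the commutative polynomial $p_{i_1' \cdots i_{n-2}'}$ is nonzero, and then by Lemma \ref{L2.3} choose $\bar{c}_1, \ldots, \bar{c}_n \in K^m$ so that $p_{i_1' \cdots i_{n-2}'}(\bar{c}_{j_1}, \ldots, \bar{c}_{j_{n-1}}) \neq 0$ for every increasing tuple $1 \leq j_1 < \cdots < j_{n-1} \leq n$. These will play the role of the distinguished diagonals of the target tuple $(u_1,\ldots,u_m)$ in both degenerate cases.

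For the case $a = 0$ (with $b, c$ arbitrary) I construct $u_1, \ldots, u_m$ by placing $\bar{c}_j$ in the $j$-th diagonal slot, zeroing every entry at position $(1, 2)$, setting $a^{(i_k')}_{k+1, k+2} = 1$ for $k = 2, \ldots, n-2$ (with other directions at these positions zero), introducing unknowns $\alpha = a^{(i_1')}_{2, 3}$ and $\beta = a^{(i_1')}_{1, 3}$ (with other directions at $(2, 3)$ and $(1, 3)$ zero), and setting every remaining entry to zero. Lemma \ref{L2.1}(ii) kills every contribution of length less than $n-2$, so only length-$(n-2)$ paths reach $(1, n-1), (2, n), (1, n)$, and in addition the single length-$(n-1)$ chain can reach $(1, n)$. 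A direct path enumeration then shows: the only candidate contributor to $p_{1, n-1}$ is $1 \to 2 \to \cdots \to n-1$, which vanishes because $(1, 2) = 0$; the only contributor to $p_{2, n}$ is $2 \to 3 \to \cdots \to n$, collapsing to $\alpha \cdot p_{i_1' \cdots i_{n-2}'}(\bar{c}_2, \ldots, \bar{c}_n)$; the only surviving contributor to $p_{1, n}$ is the length-$(n-2)$ path $1 \to 3 \to 4 \to \cdots \to n$ (the length-$(n-1)$ chain and every other length-$(n-2)$ path pass through the zeroed entry $(1, 2)$ or through another second-superdiagonal position that has been zeroed), collapsing to $\beta \cdot p_{i_1' \cdots i_{n-2}'}(\bar{c}_1, \bar{c}_3, \ldots, \bar{c}_n)$. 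Both leading coefficients are nonzero by the choice of $\bar{c}_j$, so I solve linearly for $\alpha, \beta$ to realize $A'$. The case $c = 0$ (with $a, b$ arbitrary) is the mirror argument: zero out every entry at position $(n-1, n)$ and solve unknowns at $(n-2, n-1)$ and $(n-2, n)$.

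The main technical obstacle is the path enumeration verifying that zeroing $(1, 2)$ together with every other second-superdiagonal entry leaves exactly one surviving path for each of $p_{2, n}$ and $p_{1, n}$; this single-monomial collapse is the structural feature special to $r = n-2$ and is precisely why one image of $p$, rather than a sum of two, already recovers $T_n(K)^{(n-3)}$, in contrast to the general case $1 < r < n-2$ that underlies the main theorem.
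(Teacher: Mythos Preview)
Your proof is correct. It differs from the paper's in organization rather than in substance. The paper does not invoke Lemma~\ref{L3.2} here at all; instead it splits into just two cases, $a_{1,n-1}' \neq 0$ (Case~1) and $a_{1,n-1}' = 0$ (Case~2), and handles each by an explicit construction. Your Case ``$a=0$'' is exactly the paper's Case~2, with the minor simplification that you fix the superdiagonal entries $a^{(i_k')}_{k+1,k+2}=1$ from the outset, whereas the paper leaves them as free parameters and then observes the specific substitution works. Your appeal to Lemma~\ref{L3.2} replaces most of the paper's Case~1, but since Lemma~\ref{L3.2} requires \emph{both} $a\neq 0$ and $c\neq 0$, you must add the mirror case $c=0$ to cover $a\neq 0,\,c=0$; the paper avoids this extra case because its direct Case~1 construction (with unknowns at $(1,2)$, $(n-1,n)$, and $(n-2,n)$, and an application of Lemma~\ref{L2.4} to make three auxiliary polynomials simultaneously nonzero) already handles arbitrary $c$. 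In short: your route reuses the heavy Lemma~\ref{L3.2} to shorten the generic case at the cost of a third (symmetric) case, while the paper keeps Lemma~\ref{L10} self-contained with two cases. Both approaches hinge on the same path-collapse observation you highlight, namely that for $r=n-2$ the relevant entries reduce to a single monomial in the free unknowns.
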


\begin{proof}
In view of Lemma \ref{L2.1}(ii) we note that $p(T_n(K))\subseteq T_n(K)^{(n-3)}$. It suffices to prove that $T_n(K)^{(n-3)}\subseteq p(T_n(K))$.

For any $u_i=(a_{jk}^{(i)})\in T_n(K)$, $i=1,\ldots,m$, we get from (\ref{e2}) that
\begin{equation}\label{Jia1}
p(u_1,\ldots,u_m)=\left(
\begin{array}{ccccc}
0 & 0 & \ldots & p_{1,n-1} & p_{1n}\\
0 & 0 & \ldots & 0 & p_{2n}\\
 \vdots & \vdots & \ddots & \vdots & \vdots\\
 0 & 0 & \ldots & 0 & 0
\end{array} \right),
\end{equation}
where
\[
\left\{
\begin{aligned}
p_{1,n-1}&=\sum\limits_{(i_1,\ldots,i_{n-2})\in T_{n-2}}p_{i_1\cdots i_{n-2}}(\bar{a}_{11},\ldots,\bar{a}_{n-1,n-1})a_{12}^{(i_1)}\cdots a_{n-2,n-1}^{(i_{n-2})};\\
p_{2n}&=\sum\limits_{(i_1,\ldots,i_{n-2})\in T_{n-2}}
p_{i_1\cdots i_{n-2}}(\bar{a}_{22},\ldots,\bar{a}_{n,n})a_{23}^{(i_1)}\cdots a_{n-1,n}^{(i_{n-2})};\\
p_{1n}&=\sum\limits_{\substack{1=j_1<\cdots <j_{n-1}=n\\(i_1,\ldots,i_{n-2})\in T_{n-2}}}
p_{i_1\cdots i_{n-2}}(\bar{a}_{j_1j_1},\ldots,\bar{a}_{j_{n-1}j_{n-1}})a_{j_1j_2}^{(i_1)}\cdots a_{j_{n-2}j_{n-1}}^{(i_{n-2})}\\
&\ \ \ +\sum\limits_{(i_1,\ldots,i_{n-1})\in T_{n-1}}p_{i_1\cdots i_{n-1}}(\bar{a}_{11},\ldots,\bar{a}_{nn})a_{12}^{(i_1)}\cdots a_{n-1,n}^{(i_{n-1})}.
\end{aligned}
\right.
\]

In view of Lemma \ref{L2.1}(iii) we have that
\[
p_{i_1',\ldots,i_{n-2}'}(K)\neq\{0\},
\]
for some $i_1',\ldots,i_{n-2}'\in\{1,\ldots,m\}$. It follows from Lemma \ref{L2.3} that
there exist $\bar{b}_1,\ldots,\bar{b}_n\in K^m$ such that
\[
p_{i_1',\ldots,i_{n-2}'}(\bar{b}_{j_1},\ldots,\bar{b}_{j_{n-1}})\neq 0
\]
for all $1\leq j_1<\cdots <j_{n-1}\leq n$.

For any $A'=(a_{s,n-2+s+t}')\in T_n(K)^{(n-3)}$, where $1\leq s<n-2+s+t\leq n$, we claim that there exist $u_i=(a_{jk}^{(i)})\in T_n(K)$, $i=1,\ldots,m$, such that
\[
p(u_1,\ldots,u_m)=(p_{s,n-2+s+t})=A'.
\]
That is
\[
\left\{
\begin{aligned}
p_{1,n-1}&=a_{1,n-1}';\\
p_{2n}&=a_{2n}';\\
p_{1n}&=a_{1n}'.
\end{aligned}
\right.
\]

We prove the claim by the following two cases:\\

\emph{Case 1.} Suppose that $a_{1,n-1}'\neq 0$. We take
\[
\left\{
\begin{aligned}
\bar{a}_{ii}&=\bar{b}_i, \quad\mbox{for all $i=1,\ldots,n$};\\
a_{12}^{(i_1')}&=x_{12}^{(i_1')};\\
a_{12}^{(k)}&=0\quad\mbox{for all $k=1,\ldots,m$ with $k\neq i_1'$};\\
a_{n-1,n}^{(i_{n-2}')}&=x_{n-1,n}^{(i_{n-2}')};\\
a_{n-1,n}^{(k)}&=0\quad\mbox{for all $k=1,\ldots,m$ with $k\neq i_{n-2}'$};\\
a_{n-2,n}^{(i_{n-2}')}&=x_{n-2,n}^{(i_{n-2}')};\\
a_{j,j+2}^{(i)}&=0\quad\mbox{for all $1\leq j<j+2\leq n$ with $(j,j+2,i)\neq (n-2,n,i_{n-2}')$}.
\end{aligned}
\right.
\]
It follows from (\ref{Jia1}) that

\begin{equation}\label{Jia2}
\left\{
\begin{aligned}
p_{1,n-1}&=\left(\sum\limits_{\substack{(i_2,\ldots,i_{n-2})\in T_{n-3}}}p_{i_1'i_2\cdots i_{n-2}}(\bar{b}_{1},\ldots,\bar{b}_{n-1})a_{23}^{(i_2)}\cdots a_{n-2,n-1}^{(i_{n-2})}\right)x_{12}^{(i_1')};\\
p_{2n}&=\left(\sum\limits_{\substack{(i_1,\ldots,i_{n-3})\in T_{n-3}}}p_{i_1\cdots i_{n-3}i_{n-2}'}(\bar{b}_{2},\ldots,\bar{b}_{n})a_{23}^{(i_1)}\cdots a_{n-2,n-1}^{(i_{n-3})}\right)x_{n-1,n}^{(i_{n-2}')};\\
p_{1n}&=\left(\sum\limits_{\substack{(i_2,\ldots,i_{n-3})\in T_{n-4}}}p_{i_1'i_2\cdots i_{n-3}i_{n-2}'}(\bar{b}_{1},\ldots,\bar{b}_{n-2},\bar{b}_{n})a_{23}^{(i_2)}\cdots a_{n-3,n-2}^{(i_{n-3})}\right)x_{12}^{(i_1')}x_{n-2,n}^{(i_{n-2}')}\\
&\ \ \ +\left(\sum\limits_{(i_2,\ldots,i_{n-2})\in T_{n-3}}p_{i_1'i_2\cdots i_{n-2}i_{n-2}'}(\bar{b}_{1},\ldots,\bar{b}_{n})a_{23}^{(i_2)}\cdots a_{n-2,n-1}^{(i_{n-2})}\right)x_{12}^{(i_1')}x_{n-1,n}^{(i_{n-2}')}.
\end{aligned}
\right.
\end{equation}

We set
\begin{equation}\label{Jia5}
\left\{
\begin{aligned}
f_{1,n-1}&=\sum\limits_{\substack{(i_2,\ldots,i_{n-2})\in T_{n-3}}}p_{i_1'i_2\cdots i_{n-2}}(\bar{b}_{1},\ldots,\bar{b}_{n-1})a_{23}^{(i_2)}\cdots a_{n-2,n-1}^{(i_{n-2})}\\
f_{2n}&=\sum\limits_{\substack{(i_1,\ldots,i_{n-3})\in T_{n-3}}}p_{i_1\cdots i_{n-3}i_{n-2}'}(\bar{b}_{2},\ldots,\bar{b}_{n})a_{23}^{(i_1)}\cdots a_{n-2,n-1}^{(i_{n-3})}\\
f_{1n}&=\sum\limits_{\substack{(i_2,\ldots,i_{n-3})\in T_{n-4}}}p_{i_1'i_2\cdots i_{n-3}i_{n-2}'}(\bar{b}_{1},\ldots,\bar{b}_{n-2},\bar{b}_{n})a_{23}^{(i_2)}\cdots a_{n-3,n-2}^{(i_{n-3})}
\end{aligned}
\right.
\end{equation}
and
\begin{eqnarray*}
\begin{split}
V_{1,n-1}&=\{(i,i+1,k)~|~i=2,\ldots,n-2,k=1,\ldots,m\};\\
V_{2n}&=\{(i,i+1,k)~|~i=2,\ldots,n-2,k=1,\ldots,m\};\\
V_{1n}&=\{(i,i+1,k)~|~i=2,\ldots,n-3,k=1,\ldots,m\}.
\end{split}
\end{eqnarray*}

Note that $V_{1,n-1},V_{2n}, V_{1n}$ are the index sets of $f_{1,n-1},f_{2n},f_{1n}$, respectively.

We claim that $f_{1,n-1},f_{2n},f_{1n}\neq 0$. Indeed, we take $a_{jk}^{(i)}\in K$, $(j,k,i)\in V_{1,n-1}$ such that
\[
\left\{
\begin{aligned}
a_{s,s+1}^{(i_s')}&=1\quad\mbox{for all $s=2,\ldots,n-2$};\\
a_{jk}^{(i)}&=0\quad\mbox{otherwise}.
\end{aligned}
\right.
\]
It follows from (\ref{Jia5}) that
\[
f_{1,n-1}(a_{jk}^{(i)})=p_{i_1'\cdots i_{n-2}'}(\bar{b}_{1},\ldots,\bar{b}_{n-1})\neq 0
\]
as desired. Next, we take $a_{jk}^{(i)}\in K$, $(j,k,i)\in V_{2n}$ such that
\[
\left\{
\begin{aligned}
a_{s,s+1}^{(i_{s-1}')}&=1\quad\mbox{for all $s=2,\ldots,n-2$};\\
a_{jk}^{(i)}&=0\quad\mbox{otherwise}.
\end{aligned}
\right.
\]
It follows from (\ref{Jia5}) that
\[
f_{2n}(a_{jk}^{(i)})=p_{i_1'\cdots i_{n-2}'}(\bar{b}_{2},\ldots,\bar{b}_{n})\neq 0
\]
as desired. Finally, we take $a_{jk}^{(i)}\in K$, $(j,k,i)\in V_{1n}$ such that
\[
\left\{
\begin{aligned}
a_{s,s+1}^{(i_s')}&=1\quad\mbox{for all $s=2,\ldots,n-3$};\\
a_{jk}^{(i)}&=0\quad\mbox{otherwise}.
\end{aligned}
\right.
\]
It follows from (\ref{Jia5}) that
\[
f_{1n}(a_{jk}^{(i)})=p_{i_1'\cdots i_{n-2}'}(\bar{b}_{1},\ldots,\bar{b}_{n-2},\bar{b}_n)\neq 0
\]
as desired. In view of Lemma \ref{L2.4} we get that there exist $a_{jk}^{(i)}\in K$, where $(j,k,i)\in V_{1,n-1}\cup V_{2n}\cup V_{1n}$ such that
\[
\left\{
\begin{aligned}
f_{1,n-1}(a_{jk}^{(i)})&\neq 0;\\
f_{2n}(a_{jk}^{(i)})&\neq 0;\\
f_{1n}(a_{jk}^{(i)})&\neq 0.
\end{aligned}
\right.
\]

We set
\[
\alpha=\sum\limits_{(i_2,\ldots,i_{n-2})\in T_{n-3}}p_{i_1'i_2\cdots i_{n-2}i_{n-2}'}(\bar{b}_{1},\ldots,\bar{b}_{n})a_{23}^{(i_2)}\cdots a_{n-2,n-1}^{(i_{n-2})}.
\]
It follows from (\ref{Jia2}) that
\begin{equation}\label{Jia6}
\left\{
\begin{aligned}
p_{1,n-1}&=f_{1,n-1}x_{12}^{(i_1')};\\
p_{2n}&=f_{2n}x_{n-1,n}^{(i_{n-2}')};\\
p_{1n}&=f_{1n}x_{12}^{(i_1')}x_{n-2,n}^{(i_{n-2}')}+\alpha x_{12}^{(i_1')}x_{n-1,n}^{(i_{n-2}')}.
\end{aligned}
\right.
\end{equation}

We take
\[
\left\{
\begin{aligned}
x_{12}^{(i_1')}&=f_{1,n-1}^{-1}a_{1,n-1}';\\
x_{n-1,n}^{(i_{n-2}')}&=f_{2n}^{-1}a_{2n}';\\
x_{n-2,n}^{(i_{n-2}')}&=f_{1n}^{-1}f_{1,n-1}(a_{1,n-1}')^{-1}\left(a_{1n}'-\alpha f_{1,n-1}^{-1}a_{1,n-1}'f_{2n}^{-1}a_{2n}'\right).
\end{aligned}
\right.
\]
It follows from (\ref{Jia6}) that
\[
\left\{
\begin{aligned}
p_{1,n-1}&=a_{1,n-1}';\\
p_{2n}&=a_{2n}';\\
p_{1n}&=a_{1n}',
\end{aligned}
\right.
\]
as desired.\\

\emph{Case 2.} Suppose that $a_{1,n-1}'=0$.  We take
\[
\left\{
\begin{aligned}
\bar{a}_{ii}&=\bar{b}_i, \quad\mbox{for all $i=1,\ldots,n$};\\
a_{12}^{(k)}&=0\quad\mbox{for all $k=1,\ldots,m$};\\
a_{23}^{(i_{1}')}&=x_{23}^{(i_{1}')};\\
a_{23}^{(k)}&=0\quad\mbox{for all $k=1,\ldots,m$ with $k\neq i_{1}'$};\\
a_{13}^{(i_{1}')}&=x_{13}^{(i_{1}')};\\
a_{j,j+2}^{(k)}&=0\quad\mbox{for all $1\leq j<j+2\leq n$ with $(j,j+2,k)\neq (1,3,i_1')$}.
\end{aligned}
\right.
\]

It follows from (\ref{Jia1}) that
\begin{equation}\label{Jia7}
\left\{
\begin{aligned}
p_{1,n-1}&=0;\\
p_{2n}&=\left(\sum\limits_{\substack{(i_2,\ldots,i_{n-2})\in T_{n-3}}}p_{i_1'i_2\cdots i_{n-2}}(\bar{b}_{2},\ldots,\bar{b}_{n})a_{34}^{(i_2)}\cdots a_{n-1,n}^{(i_{n-2})}\right)x_{23}^{(i_{1}')};\\
p_{1n}&=\left(\sum\limits_{\substack{(i_2,\ldots,i_{n-2})\in T_{n-3}}}p_{i_1'i_2\cdots i_{n-2}}(\bar{b}_{1},\bar{b}_3,\ldots,\bar{b}_{n})a_{34}^{(i_2)}\cdots a_{n-1,n}^{(i_{n-2})}\right)x_{13}^{(i_1')}.
\end{aligned}
\right.
\end{equation}

We set
\begin{equation}\label{Jia8}
\left\{
\begin{aligned}
g_{2n}&=\sum\limits_{\substack{(i_2,\ldots,i_{n-2})\in T_{n-3}}}p_{i_1'i_2\cdots i_{n-2}}(\bar{b}_{2},\ldots,\bar{b}_{n})a_{34}^{(i_2)}\cdots a_{n-1,n}^{(i_{n-2})};\\
g_{1n}&=\sum\limits_{\substack{(i_2,\ldots,i_{n-2})\in T_{n-3}}}p_{i_1'i_2\cdots i_{n-2}}(\bar{b}_{1},\bar{b}_3,\ldots,\bar{b}_{n})a_{34}^{(i_2)}\cdots a_{n-1,n}^{(i_{n-2})}
\end{aligned}
\right.
\end{equation}
and
\[
V=\{(i,i+1,k)~|~i=3,\ldots,n-1,k=1,\ldots,m\}.
\]

Note that $V$ is the index set of $g_{2n}$ and $g_{1n}$. We claim that $g_{2n},g_{1n}\neq 0$. Indeed, we take $a_{jk}^{(i)}\in K$, $(j,k,i)\in V$ such that
\[
\left\{
\begin{aligned}
a_{s,s+1}^{(i_{s-1}')}&=1\quad\mbox{for all $s=3,\ldots,n-1$};\\
a_{jk}^{(i)}&=0\quad\mbox{otherwise}.
\end{aligned}
\right.
\]

It follows from (\ref{Jia8}) that

\begin{eqnarray*}
\begin{split}
g_{2n}&=p_{i_1'\cdots i_{n-2}'}(\bar{b}_{2},\ldots,\bar{b}_{n})\neq 0;\\
g_{1n}&=p_{i_1'\cdots i_{n-2}'}(\bar{b}_{1},\bar{b}_3,\ldots,\bar{b}_{n})\neq 0.
\end{split}
\end{eqnarray*}
as desired. It follows from (\ref{Jia7}) that

\begin{equation}\label{Jia9}
\left\{
\begin{aligned}
p_{1,n-1}&=0;\\
p_{2n}&=g_{2n}x_{23}^{(i_{1}')};\\
p_{1n}&=g_{1n}x_{13}^{(i_1')}.
\end{aligned}
\right.
\end{equation}

We take
\[
\left\{
\begin{aligned}
x_{23}^{(i_1')}&=g_{2n}^{-1}a_{2n}';\\
x_{13}^{(i_1')}&=g_{1n}^{-1}a_{1n}'.
\end{aligned}
\right.
\]
It follows from (\ref{Jia9}) that
\[
\left\{
\begin{aligned}
p_{1,n-1}&=0;\\
p_{2n}&=a_{2,n}';\\
p_{1n}&=a_{1n}',
\end{aligned}
\right.
\]
as desired. We obtain that
\[
p(u_1,\ldots,u_m)=(p_{s,n-2+s+t})=(a_{s,n-2+s+t}')=A'.
\]
This implies that $T_n(K)^{(n-3)}\subseteq p(T_n(K))$. Hence $p(T_n(K))=T_n(K)^{(n-3)}$. The proof of the result is complete.
\end{proof}

We are ready to give the proof of the main result of the paper.

\begin{proof}[The proof of Theorem \ref{T1}]
For any $A=(a_{s,r+s+t})\in T_n(K)^{(r-1)}$, we consider the following polynomials:
\[
\left\{
\begin{aligned}
f_{s,r+s}(x_{s,r+s})&=a_{s,r+s}-x_{s,r+s};\\
g_{s,r+s}(x_{s,r+s})&=x_{s,r+s}
\end{aligned}
\right.
\]
for all $1\leq s<r+s\leq n$. It is clear that both $f_{s,r+s}$ and $g_{s,r+s}$ are nonzero commutative polynomials over $K$, where $1\leq s<r+s\leq n$. It follows from Lemma \ref{L2.4} that there exist $b_{s,r+s}\in K$, $1\leq s<r+s\leq n$, such that
\[
\left\{
\begin{aligned}
f_{s,r+s}(b_{s,r+s})&\neq 0;\\
g_{s,r+s}(b_{s,r+s})&\neq 0
\end{aligned}
\right.
\]
for all $1\leq s<r+s\leq n$. That is
\[
\left\{
\begin{aligned}
a_{s,r+s}-b_{s,r+s}&\neq 0;\\
b_{s,r+s}&\neq 0
\end{aligned}
\right.
\]
for all $1\leq s<r+s\leq n$. We set
\[
c_{s,r+s}=a_{s,r+s}-b_{s,r+s}
\]
for all $1\leq s<r+s\leq n$ and
\[
\left\{
\begin{aligned}
b_{s,r+s+t}&=a_{s,r+s+t};\\
c_{s,r+s+t}&=0
\end{aligned}
\right.
\]
for all $1\leq s<r+s+t\leq n$ and $t>0$. We set
\[
B=(b_{s,r+s+t})\quad\mbox{and}\quad C=(c_{s,r+s+t}).
\]
It is clear that $B,C\in T_n(K)^{(r-1)}$ and
\[
A=B+C.
\]

In view of Lemma \ref{L3.2}, we get that there exist $u_i,v_i\in T_n(K)$, $i=1,\ldots,m$, such that
\[
p(u_1,\ldots,u_m)=B
\]
and
\[
p(v_1,\ldots,v_m)=C.
\]
It follows that
\[
p(u_1,\ldots,u_m)+p(v_1,\ldots,v_m)=A.
\]
This implies that
\[
T_n(K)^{(r-1)}\subseteq p(T_n(K))+p(T_n(K)).
\]
In view of Lemma \ref{L2.1}(ii) we note that $p(T_n(K))\subseteq T_n(K)^{(r-1)}$. Since $T_n(K)^{(r-1)}$ is a subspace of $T_n(K)$ we get that
\[
 p(T_n(K))+p(T_n(K))\subseteq T_n(K)^{(r-1)}.
 \]
We obtain that
\[
 p(T_n(K))+p(T_n(K))=T_n(K)^{(r-1)}.
 \]

In particular, if $r=n-2$ we get from Lemma \ref{L10} that
\[
p(T_n(K))=T_n(K)^{(n-3)}.
\]
The proof of the result is complete.
\end{proof}

We conclude the paper with following example.

\begin{example}\label{EX}
Let $n\geq 5$ and $1<r<n-2$ be integers. Let $K$ be an infinite field. Let
\[
p(x,y)=[x,y]^r.
\]
We have that ord$(p)=r$ and $p(T_n(K))\neq T_n(K)^{(r-1)}$.
\end{example}

\begin{proof}
It is easy to check that $p(T_r(K))=\{0\}$. Set
\[
f(x,y)=[x,y].
\]
Note that $f$ is a multilinear polynomial over $K$. It is clear that ord$(f)=1$. In  view of \cite[Theorem 4.3]{Mello} or \cite[Theorem 1.1]{Wang2022} we have that
\[
f(T_{r+1}(K))=T_{r+1}(K)^{(0)}.
\]
It implies that there exist $A,B\in T_{r+1}(K)$ such that
\[
[A,B]=e_{12}+e_{23}+\cdots +e_{r,r+1}.
\]
We get that
\[
p(A,B)=[A,B]^r=e_{1,r+1}\neq 0.
\]
This implies that $p(T_{r+1}(K))\neq\{0\}$. We obtain that ord$(p)=r$.

Suppose on contrary that $p(T_n(K))=T_n(K)^{(r-1)}$ for some $n\geq 5$ and $1<r<n-2$. For $A'=e_{1,r+1}+e_{3,r+3}\in T_n(K)^{(r-1)}$, we get that there exists $B,C\in T_n(K)$ such that
\[
p(B,C)=[B,C]^r=A'.
\]
It is clear that $[B,C]\in T_n(K)^{(0)}$. We set
\[
A=[B,C]=(a_{s,1+s+t}).
\]
It follows that
\[
A^r=A'.
\]
We get from the last relation that
\[
\left\{
\begin{aligned}
(a_{12}a_{23}\cdots a_{r,r+1})e_{1,r+1}&=e_{1,r+1};\\
(a_{23}a_{34}\cdots a_{r+1,r+2})e_{2,r+2}&=0;\\
(a_{34}a_{45}\cdots a_{r+2,r+3})e_{3,r+3}&=e_{3,r+3}.
\end{aligned}
\right.
\]
This is a contradiction. We obtain that $p(T_n(K))\neq T_n(K)^{(r-1)}$ for all $n\geq 5$ and $1<r<n-2$. This proves the result.
\end{proof}

We remark that \cite[Example 5.7]{PP} is a special case of Example \ref{EX} ($r=2$ and $n=5$).\\

\end{document}